\newtheorem{theorem}{Theorem}[section]
\newtheorem{lemma}[theorem]{Lemma}
\newtheorem{proposition}[theorem]{Proposition}
\newtheorem{definition}{Definition}
\author{Huazhong L\"{u}\affiliationmark{1}\thanks{Corresponding author.}
  \and Tingzeng Wu\affiliationmark{2}\thanks{This research was partially supported by the National Natural Science Foundation of China (Nos. 11801061 and 11761056), the Chunhui Project of Ministry of Education (No. Z2017047) and the Fundamental Research Funds for the Central Universities (No. ZYGX2018J083)}}
\title[Super edge-connectivity and matching preclusion of data center networks]{Super edge-connectivity and matching preclusion of data center networks}
\affiliation{
  School of Mathematical Sciences, University of Electronic Science and Technology of China, Chengdu, Sichuan 610054, P.R. China\\
  School of Mathematics and Statistics, Qinghai Nationalities University, Xining, Qinghai 810007, P.R. China}
\keywords{data center network, ($n,k$)-star graph, restricted edge-connectivity, matching preclusion, conditional matching preclusion}
\begin{document}
\publicationdetails{21}{2019}{4}{2}{4689}
\maketitle
\begin{abstract}
Edge-connectivity is a classic measure for reliability of a network in the presence of edge failures. $k$-restricted edge-connectivity is one of the refined indicators for fault tolerance of large networks. Matching preclusion and conditional matching preclusion are two important measures for the robustness of networks in edge fault scenario. In this paper, we show that the DCell network $D_{k,n}$ is super-$\lambda$ for $k\geq2$ and $n\geq2$, super-$\lambda_2$ for $k\geq3$ and $n\geq2$, or $k=2$ and $n=2$, and super-$\lambda_3$ for $k\geq4$ and $n\geq3$. Moreover, as an application of $k$-restricted edge-connectivity, we study the matching preclusion number and conditional matching preclusion number, and characterize the corresponding optimal solutions of $D_{k,n}$. In particular, we have shown that $D_{1,n}$ is isomorphic to the $(n,k)$-star graph $S_{n+1,2}$ for $n\geq2$.
\end{abstract}

%
%

\section{Introduction}
\label{sec:in}

Let $G=(V(G),E(G))$ be a graph, where $V(G)$ is the vertex-set of $G$ and $E(G)$ is the edge-set of $G$. The number of vertices of $G$ is denoted by $|V(G)|$. The degree of a vertex $u$ in $G$ is denoted by $d_G(u)$. For any $X\subset V(G)$, we use $G[X]$ to denote the subgraph of $G$ induced by $X$. For other standard graph notations not defined here please refer to \cite{Bondy}.

Networks are usually modeled as graphs, and the edge-connectivity is a classic measurement for the fault tolerance of the graph. In general, the larger the edge-connectivity of the graphs, the higher the reliability of the corresponding networks. It is well-known that $\lambda(G)\leq\delta(G)$, where $\lambda(G)$ and $\delta(G)$ are the edge-connectivity and the minimum degree of $G$, respectively. To precisely measure the reliability of graphs, \cite{Esfahanian} introduced a more refined index, namely the restricted edge-connectivity. Later, \cite{Fabrega} introduced the $k$-restricted edge-connectivity as a generalisation of this concept.

An edge-cut $F$ is called a $k$-restricted edge-cut if every component of $G-F$ contains at least $k$ vertices ($k\geq2$). The {\em $k$-restricted edge-connectivity} $\lambda_k(G)$, if exists, is the minimum cardinality over all $k$-restricted edge-cuts in $G$. Let $X$ be a vertex subset of $G$ and let $\overline{X}$ be the complement of $X$, namely $\overline{X}=V(G)\setminus X$. We denote the edges between $X$ and $\overline{X}$ by $[X,\overline{X}]$. The {\em minimum $k$-edge degree} of a graph $G$ for integers $k\geq2$, is $\xi_k(G)=\min\{|[X,\overline{X}]|:|X|=k$ and $G[X]$ is connected$\}$.

For a graph $G$ satisfying $\lambda_k(G)\leq \xi_k(G)$, if $\lambda_k(G)= \xi_k(G)$ holds, then it is called {\em $\lambda_k$-optimal}. In particular, $\lambda_2$ is the restricted edge-connectivity, and accordingly $\xi_2$ is known as the edge degree.

For $\lambda_2(G)$, \cite{Esfahanian} showed that each connected graph $G$ of order at least 4 except a star ($K_{1,n-1}$) has a restricted edge-cut and satisfies $\lambda(G)\leq\lambda_2(G)\leq\xi_2(G)$. Moreover, \cite{Bonsma} have shown that if $\lambda_3(G)$ exists, then $\lambda_3(G)\leq\xi_3(G)$.

A graph $G$ is super-$\lambda$ (resp. super-$\lambda_k$) if each minimum edge-cut (resp. $k$-restricted edge-cut) isolates a singleton (resp. a connected subgraph of order $k$). It is obvious that if $G$ is super-$\lambda_k$, then $G$ is $\lambda_k$-optimal, whereas the reverse does not hold. Generally, a graph is {\em super $m$-edge-connected of order $q$} if when at least $m$ edges deleted, the resulting graph is either connected or it has one big component and a number of small components with at most $q$ vertices in total. Obviously, a super-$\lambda$ graph is super $\lambda(G)$-edge-connected of order 1.

A {\em perfect matching} of a graph $G$ is an independent edge set that saturates all vertices of $G$. For an edge subset $F$ of an graph $G$ with even order, if $G-F$ has no perfect matching in $G$, then $F$ is called a {\em matching preclusion set} of $G$. The {\em matching preclusion number}, denoted by $mp(G)$, is defined to be the minimum cardinality among all matching preclusion sets. Any such set of size $mp(G)$ is called an {\em optimal matching preclusion set} (or optimal solution). This concept was proposed by \cite{Brigham} as a measure of robustness of networks, as well as a theoretical connection with conditional connectivity and ``changing and unchanging of invariants''. Therefore, networks of larger $mp(G)$ signify higher fault tolerance under edge failure assumption.

It is obvious that the edges incident to a common vertex form a matching preclusion set. Any such set is called a {\em trivial solution}. Therefore, $mp(G)$ is no greater than $\delta(G)$. A graph is {\em super matched} if $mp(G)=\delta(G)$ and each optimal solution is trivial. In the random link failure scenario, the possibility of simultaneous failure of links in a trivial solution is very small. Motivated by this, \cite{Cheng0} introduced the following definition to seek obstruction sets excluding those induced by a single vertex. The {\em conditional matching preclusion number} of $G$, denoted by $mp_1(G)$, is the minimum number of edges whose deletion results in the graph with neither a perfect matching nor an isolated vertex. If the resulting graph has no isolated vertices after edge deletion, a path $u\rightarrow v\rightarrow w$, where the degree of both $u$ and $w$ are 1, is a basic obstruction to perfect matchings. So to generate such an obstruction set, one can pick any path $u\rightarrow v\rightarrow w$ in the original graph, and delete all the edges incident to $u$ and $w$ but not $v$. We define $v_e(G)=\min\{d_G(u)+d_G(w)-2-y_G(u,w):$ $u$ and $v$ are ends of a path of length 2$\}$, where $y_G(u,w)=1$ if $uw\in E(G)$ and 0 otherwise.

\begin{proposition}\cite{Cheng0} For a graph $G$ of even order and $\delta(G)\geq3$, mp$_1(G)\leq v_e(G)$ holds.
\end{proposition}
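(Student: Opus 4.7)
The plan is to construct, for any path $u\to v\to w$ of length $2$ achieving the minimum in the definition of $v_e(G)$, an explicit edge set $F$ of size $d_G(u)+d_G(w)-2-y_G(u,w)$ whose deletion leaves a graph with neither a perfect matching nor an isolated vertex. Since $\delta(G)\geq 3$, every vertex has at least two distinct neighbors, so such a path of length $2$ exists and $v_e(G)$ is well defined.

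First I would fix a path $u\to v\to w$ realizing the minimum, and let $F$ consist of all edges incident to $u$ other than $uv$, together with all edges incident to $w$ other than $vw$. A direct count gives $|F|=(d_G(u)-1)+(d_G(w)-1)-y_G(u,w)=d_G(u)+d_G(w)-2-y_G(u,w)$, where the correction $y_G(u,w)$ accounts for the possible edge $uw$ being counted twice.

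Next I would verify the two required properties of $G-F$. In $G-F$, the vertex $u$ has $v$ as its only neighbor, and similarly $w$ has $v$ as its only neighbor; thus any perfect matching would have to match both $u$ and $w$ to $v$, which is impossible. To rule out isolated vertices, note that $u$ and $w$ each retain an edge to $v$, while $v$ loses no incident edge and hence keeps degree at least $\delta(G)\geq 3$. For any other vertex $x\notin\{u,v,w\}$, at most two edges ($xu$ and $xw$) are removed, so its degree in $G-F$ is at least $\delta(G)-2\geq 1$.

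The construction is essentially mechanical, so there is no real obstacle beyond the careful accounting of the edge $uw$ via the indicator $y_G(u,w)$; taking the minimum over all length-$2$ paths then yields $mp_1(G)\leq v_e(G)$.
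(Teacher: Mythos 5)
Your construction is exactly the one the paper describes informally just before the proposition (delete all edges incident to $u$ and $w$ but not $v$ along a length-2 path, with the $y_G(u,w)$ correction for a possible edge $uw$), and your counting and verification of ``no perfect matching, no isolated vertex'' are correct. The paper itself cites this result without proof, so your argument is a valid filling-in of the same standard approach.
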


A conditional matching preclusion set of $G$ that achieves $mp_1(G)=v_e(G)$, a set of edges whose removal leaves the subgraph without perfect matchings and with no isolated vertices, is called an {\em optimal conditional matching preclusion set} (or optimal conditional solution). An optimal conditional solution of the basic form induced by a 2-path giving $v_e(G)$ is a {\em trivial optimal conditional solution}. As mentioned earlier, the matching preclusion number measures the robustness of the requirement in the link failure scenario, so it is desirable for an interconnection network to be super matched. Analogously, it is desirable to have the property that all the optimal conditional solutions are trivial as well. The interconnection network possesses the above property is called {\em conditionally super matched}.

Until now, the matching preclusion number of numerous networks were calculated and the corresponding optimal solutions were obtained, such as the complete graph, the complete bipartite graph and the hypercube (\cite{Brigham}), Cayley graphs generated by 2-trees and hyper Petersen networks (\cite{Cheng}), Cayley graphs generalized by transpositions and $(n,k)$-star graphs (\cite{Cheng2}), restricted HL-graphs and recursive circulant $G(2^m,4)$ (\cite{Park}), tori and related Cartesian products (\cite{Cheng3}), $(n,k)$-bubble-sort graphs (\cite{Cheng4}), balanced hypercubes (\cite{Lu}), burnt pancake graphs (\cite{Hu}), $k$-ary $n$-cubes (\cite{Wang.S2}), cube-connected cycles (\cite{Li}), vertex-transitive graphs (\cite{Li0}), $n$-dimensional torus (\cite{Hu2}), binary de Bruijn graphs (\cite{Lin}) and $n$-grid graphs (\cite{Ding}). For the conditional matching preclusion problem, it is solved for the complete graph, the complete bipartite graph and the hypercube (\cite{Cheng0}), arrangement graphs (\cite{Cheng5}), alternating group graphs and split-stars (\cite{Cheng6}), Cayley graphs generated by 2-trees and the hyper Petersen networks (\cite{Cheng}), Cayley graphs generalized by transpositions and $(n,k)$-star graphs (\cite{Cheng2}), burnt pancake graphs (\cite{Cheng00,Hu}), balanced hypercubes (\cite{Lu}), restricted HL-graphs and recursive circulant $G(2^m,4)$ (\cite{Park}), $k$-ary $n$-cubes (\cite{Wang.S2}), hypercube-like graphs (\cite{Park2}) and cube-connected cycles (\cite{Li}). Particularly, \cite{Lu1} has proved recently that it is NP-complete to determine the matching preclusion number and conditional matching preclusion number of a connected bipartite graph.

Data centers are crucial to the business of companies such as Amazon, Google and Microsoft. Data centers with large number of servers were built to offer desirable on-line applications such as web search, email, cloud storage, on-line gaming, etc. Data center networks $D_{k,n}$, DCell in short, was introduced by \cite{Guo} for parallel computing systems, which has numerous desirable features for data center networking. In DCell, a large number of servers are connected by high-speed links and switches, providing much higher network capacity compared with the tree-based systems. Several attractive properties of DCell has been explored recently, such as Hamilton property (\cite{Wang.X0}), pessimistic diagnosability (\cite{Gu}), the restricted $h$-connectivity (\cite{Wang.X}), vertex-transitivity (\cite{Lu2}) and disjoint path covers (\cite{Wang.X1}).

The restricted edge-connectivity and extra (edge) connectivity of lots of famous networks were studied  by \cite{Bonsma,Chang,Fabrega,Lu3,Zhu}. \cite{Wang.X} obtained the restricted $h$-connectivity of the DCell, which is the connectivity of $G$ under the restriction that each fault-free vertex has at least $h$ fault-free neighbors in $G$. In the same paper, the authors proposed an interesting problem that whether similar results of restricted edge-connectivity apply to the DCell network. In this paper, we study this problem and show that the DCell network $D_{k,n}$ is super-$\lambda$ for $k\geq2$ and $n\geq2$, super-$\lambda_2$ for $k\geq3$ and $n\geq2$ or $k=2$ and $n=2$, and super-$\lambda_3$ for $k\geq4$ and $n\geq3$. As a direct application of the above result, we obtain the matching preclusion number and conditional matching preclusion number, and characterize the corresponding optimal solutions of the DCell.

The rest of this paper is organized as follows. The definition of the DCell and some notations are given in Section \ref{sec:pre}. The restricted edge-connectivity of the DCell is computed in Section \ref{sec:super}. The (conditional) matching preclusion number of the DCell is obtained in Section \ref{sec:matching}.

\section{Preliminaries}
\label{sec:pre}

We begin with the definition of the DCell.

\begin{definition}\label{def}\cite{Wang.X}
A $k$ level DCell for each $k$ and some global constant $n$, denoted by $D_{k,n}$, is recursively defined as follows. Let $D_{0,n}$ be the complete graph $K_n$ and let $t_{k,n}$ be the number of vertices in $D_{k,n}$. For $k\geq1$, $D_{k,n}$ is constructed from $t_{k-1,n}+1$ disjoint copies of $D_{k-1,n}$, where $D_{k-1,n}^i$ denotes the $i$th copy. Each pair of $D_{k-1,n}^a$ and $D_{k-1,n}^b$ ($a<b$) is joined by a unique $k$ level edge below.

A vertex of $D_{k-1,n}^i$ is labeled by $(i,a_{k-1},\cdots,a_0)$, where $k\geq1$ and $a_0\in\{0,1,\cdots,$ $n-1\}$. The suffix $(a_{j},a_{j-1},\cdots,a_0)$, of a vertex $v$, has the unique $uid_j$, given by $uid_j(v)=a_0+\sum_{l=1}^{j}(a_lt_{l-1,n})$. The vertex $uid_{k-1}$ $b-1$ of $D_{k-1,n}^a$ is connected to $uid_{k-1}$ $a$ of $D_{k-1,n}^b$.
\end{definition}

By the definition above, it is obvious that $D_{0,n}$ is the complete graph $K_n$ ($n\geq2$) and $D_{1,2}$ is a 6-cycle. We illustrate some $D_{k,n}$ with small parameters $k$ and $n$ in Fig. \ref{DCell}. By Definition \ref{def}, we know that there exists exactly one edge, called a {\em level} $k$ edge, between $D_{k-1,n}^a$ and $D_{k-1,n}^b$. For convenience, let $E_k$ denote the set of all level $k$ edges of $D_{k,n}$. Let $F\subseteq E(D_{k,n})$ and $p=|V(D_{k-1,n})|$, we denote $F^i=E(D_{k-1,n}^i)\cap F$ and $f^i=|F^i|$ for $0\leq i\leq p$. We use $e_k(u)$ to denote the level $k$ edge incident with $u$ and $u^k$ to denote its level $k$ neighbor.

\begin{figure}[htbp]
\centering
\includegraphics[height=75mm]{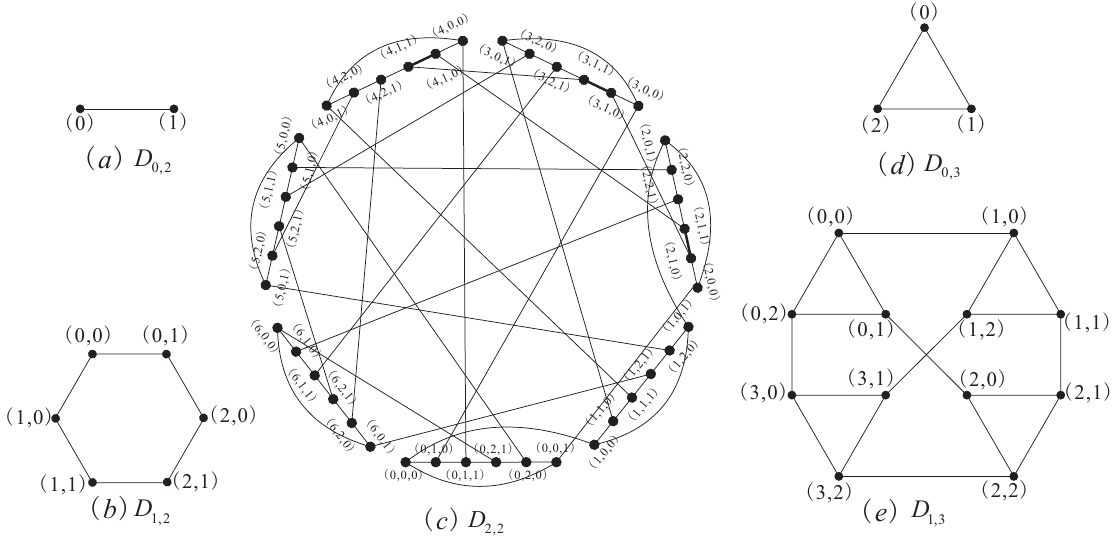}
\caption{Some small DCells.} \label{DCell}
\end{figure}

\section{Super edge-connectivity of DCell}
\label{sec:super}

It is not hard to see that $\lambda(D_{k,n})=n+k-1$. Observe that the edges coming from a complete subgraph $K_n$ form a non-trivial minimum edge-cut of $D_{1,n}$, so $D_{1,n}$ is not super-$\lambda$ for $n\geq2$. For $k\geq2$ and $n\geq2$, we have the following result.

\begin{theorem}\label{super-lambda} $D_{k,n}$ is super-$\lambda$ for all $k\geq2$ and $n\geq2$.
\end{theorem}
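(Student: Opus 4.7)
The plan is to take a hypothetical minimum edge-cut $F$ of $D_{k,n}$ with $|F|=\lambda(D_{k,n})=n+k-1$, let $X$ and $\overline{X}$ be the two components of $D_{k,n}-F$, and show that one of them must be a singleton. Using the recursive decomposition of $D_{k,n}$ into the subcopies $D_{k-1,n}^0,\ldots,D_{k-1,n}^{t_{k-1,n}}$, set $X^i=X\cap V(D_{k-1,n}^i)$ and $\overline{X}^i=\overline{X}\cap V(D_{k-1,n}^i)$, and call the $i$th subcopy \emph{split} if both $X^i$ and $\overline{X}^i$ are nonempty. The only ingredients needed are the known value $\lambda(D_{k-1,n})=n+k-2$, the fact that each vertex lies on exactly one level-$k$ edge, and the elementary numerical bound $t_{k-1,n}>n+k-1$ valid for all $k\geq 2$, $n\geq 2$.

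First I would rule out the two extreme cases. If two or more subcopies are split, then within each split $D_{k-1,n}^i$ the set $F$ must contain at least $\lambda(D_{k-1,n})=n+k-2$ internal edges separating $X^i$ from $\overline{X}^i$; these contributions come from disjoint edge sets, so $|F|\geq 2(n+k-2)>n+k-1$ whenever $n+k\geq 4$, which always holds. If no subcopy is split, then each $D_{k-1,n}^i$ lies entirely in $X$ or in $\overline{X}$; say $a$ of them in $X$ and $b$ in $\overline{X}$, with $a+b=t_{k-1,n}+1$ and $a,b\geq 1$. Then all $ab$ level-$k$ edges joining an $X$-subcopy to an $\overline{X}$-subcopy belong to $F$, so $|F|\geq ab\geq t_{k-1,n}>n+k-1$, again a contradiction.

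The main case is therefore exactly one split subcopy, say $D_{k-1,n}^i$. If the remaining $t_{k-1,n}$ subcopies split into $a\geq 1$ on the $X$-side and $b\geq 1$ on the $\overline{X}$-side, then $|F|\geq (n+k-2)+ab\geq (n+k-2)+(t_{k-1,n}-1)>n+k-1$, which is impossible. Hence all non-$i$ subcopies lie on one side, say the $X$-side, giving $\overline{X}=\overline{X}^i\subseteq V(D_{k-1,n}^i)$. Every vertex of $\overline{X}^i$ has its level-$k$ neighbor in some non-$i$ subcopy, hence in $X$, so all $|\overline{X}^i|$ level-$k$ edges incident to $\overline{X}^i$ belong to $F$; combined with the at least $n+k-2$ internal edges of $D_{k-1,n}^i$ cutting $X^i$ from $\overline{X}^i$, this gives $|F|\geq (n+k-2)+|\overline{X}^i|$. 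Since $|F|=n+k-1$, we must have $|\overline{X}^i|=1$, so $\overline{X}$ is a singleton, as required. The main obstacle is purely organizational: one must bound $|F|$ sharply enough in each case so that every non-singleton scenario is strictly ruled out; no induction on $k$ is actually required, only the value of $\lambda(D_{k-1,n})$ and the size of a subcopy.
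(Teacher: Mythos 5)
Your proof is correct and uses essentially the same ingredients as the paper's: the recursive decomposition into the $p+1$ subcopies $D_{k-1,n}^i$, the value $\lambda(D_{k-1,n})=n+k-2$, the fact that each vertex has exactly one level-$k$ neighbor, and the bound $p>n+k-1$. You organize it as a counting argument over the partition $(X,\overline{X})$ (classifying subcopies as split or not) rather than the paper's case analysis on the distribution of faulty edges among the subcopies, but this is only a dual bookkeeping of the same argument, and your final inequality $|F|\geq (n+k-2)+|\overline{X}^i|$ cleanly forces the singleton conclusion.
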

\begin{proof} By Definition \ref{def}, $D_{k,n}$ can be split into $p+1$ copies of $D_{k-1,n}$, denoted by $D_{k-1,n}^i$, $0\leq i\leq p$. It is clear that every vertex in $D_{k-1,n}^i$ has exactly one neighbor not in $D_{k-1,n}^i$. In addition, there is exactly one edge between $D_{k-1,n}^i$ and $D_{k-1,n}^j$ for $i\neq j$. Let $F$ be any minimum edge-cut of $D_{k,n}$, then $|F|=n+k-1$. Assume that $D_{k,n}-F$ is disconnected. We need to show that $F$ is the set of edges incident to a unique vertex.

\begin{enumerate}[{Case }1:]
\item $f^i\leq n+k-3$ for each $0\leq i\leq p$. Obviously, $D_{k-1,n}^i-F^i$ is connected since $D_{k-1,n}^i$ is $n+k-2$ edge-connected. By contracting each $D_{k-1,n}^i$ of $D_{k,n}$ into a singleton, we obtain a complete graph $K_{p+1}$. Moreover, the edges of $K_{p+1}$ obtained above correspond to all level $k$ edges in $D_{k,n}$. It is clear that $p>n+k-1$ whenever $n\geq2$ and $k\geq2$, therefore, $K_{p+1}$ is connected when we delete at most $n+k-1$ edges. (This fact will be used time and time again in the remainder of this paper.) This implies that $D_{k,n}-F$ is connected, a contradiction.

\item $f^i\geq n+k-2$ for some $i\in\{0,\cdots, p\}$. Suppose without loss of generality that $f^0\geq n+k-2$. If $f^0=n+k-1$, then $F^i=\emptyset$ for $1\leq i\leq p$. Since each vertex in $D_{k-1,n}^0$ has exactly one neighbor not in $D_{k-1,n}^0$, $D_{k,n}-F$ is connected, a contradiction. We now assume that $f^0=n+k-2$. If $D_{k-1,n}^0-F^0$ is connected, by the discussion in Case 1, then $D_{k,n}-F$ is connected. So we assume that $D_{k-1,n}^0-F^0$ is disconnected and $C$ is one of its components. Clearly, $D_{k,n}-V(D_{k-1,n}^0)-F$ is connected. If $C$ is a singleton and, furthermore, the level $k$ edge incident to $C$ is contained in $F$, then $F$ is a super edge-cut of $D_{k,n}$; otherwise $D_{k,n}-F$ is connected. If $C$ consists of at least two vertices, noting each vertex of $D_{k-1,n}^0$ has a neighbor not in $D_{k-1,n}^0$, then $C$ is connected to $D_{k,n}-V(D_{k-1,n}^0)-F$, yielding that $D_{k,n}-F$ is connected, a contradiction. Hence, the statement holds.
\end{enumerate}
\end{proof}

As mentioned earlier, there exists a non-trivial restricted edge-cut if $k=1$, which implies that $D_{1,n}$ is not super-$\lambda_2$ for all $n\geq2$.

\begin{lemma}\label{Complete-super-lambda-dash}\cite{Wang.S} The complete graph $K_n$ is super-$\lambda_2$ for $n\geq 4$.
\end{lemma}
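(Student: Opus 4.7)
The plan is a direct counting argument based on the size of the smallest component of $K_n-F$ for a minimum $2$-restricted edge-cut $F$.

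First I would compute the relevant upper bound. For any edge $uv$ in $K_n$, the set $X=\{u,v\}$ is connected and satisfies $|[X,\overline{X}]|=2(n-2)=2n-4$, so $\xi_2(K_n)=2n-4$ and hence $\lambda_2(K_n)\le 2n-4$. Now let $F$ be any minimum $2$-restricted edge-cut of $K_n$, so $|F|\le 2n-4$ and every component of $K_n-F$ has at least two vertices. Let $C$ be a smallest component, with $|V(C)|=k$; then $2\le k\le n/2$, and because $K_n$ is complete the number of edges leaving $V(C)$ in $K_n$ equals exactly $k(n-k)$, forcing $|F|\ge k(n-k)$.

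Next I would solve the inequality $k(n-k)\le 2n-4$. Rewriting as $(k-2)n\le k^{2}-4=(k-2)(k+2)$ shows that either $k=2$ or $n\le k+2$. The second alternative, combined with $k\le n/2$, gives $2k\le n\le k+2$, hence $k\le 2$, contradicting $k\ge 3$. Therefore $k=2$, and then $|F|\ge 2(n-2)=2n-4$; combined with $|F|\le 2n-4$, equality holds throughout. Consequently $F=[V(C),\overline{V(C)}]$ and $V(C)$ consists of two vertices which are adjacent in $K_n$ (so $C$ is connected of order $2$), proving that $F$ isolates a connected subgraph of order $2$.

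There is no real obstacle in this argument; the whole content is the elementary inequality on $k(n-k)$, and the hypothesis $n\ge 4$ is precisely what guarantees that a $2$-restricted edge-cut can exist at all (for $n=3$ one could not even have two components of size $\ge 2$). I would only need to remark, for completeness, that a $2$-restricted edge-cut exists when $n\ge 4$, since $\xi_2(K_n)=2n-4<\binom{n}{2}$ in that range, so $\lambda_2(K_n)$ is well defined and the above analysis applies.
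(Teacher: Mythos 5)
Your argument is correct. Note, however, that the paper does not prove this lemma at all: it is imported as a known result with a citation to Wang et al., so there is no ``paper proof'' to compare against. Your counting argument is the standard self-contained one: if $C$ is a smallest component of $K_n-F$ with $|V(C)|=k$, then $F\supseteq[V(C),\overline{V(C)}]$ forces $|F|\ge k(n-k)$, and the inequality $k(n-k)\le 2n-4$ together with $2\le k\le n/2$ pins down $k=2$ and forces $F=[V(C),\overline{V(C)}]$ exactly; since $\overline{V(C)}$ induces the connected graph $K_{n-2}$, the cut indeed isolates an edge, which is precisely super-$\lambda_2$. The only cosmetic point is your closing remark on existence: the inequality $2n-4<\binom{n}{2}$ is not by itself the reason a $2$-restricted edge-cut exists; the relevant fact is that for $n\ge4$ the set $[X,\overline{X}]$ with $|X|=2$ already leaves two components each of order at least $2$ (equivalently, one can invoke the Esfahanian--Hakimi result quoted in Section~1, since $K_n$ with $n\ge4$ is not a star). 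With that remark adjusted, the proof is complete and gives slightly more than the cited statement, namely an explicit description of all minimum $2$-restricted edge-cuts.
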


\begin{lemma}\label{D-k-n-common-neighbor}  Let $uv$ be any edge in $D_{k,n}$ for $n\geq2$ and $k\geq1$. If $uv$ is a level 0 edge, then $u$ and $v$ have exactly $n-2$ common neighbors; if $uv$ is a level $j$ edge, $1\leq j\leq k$, then $u$ and $v$ have no common neighbors.
\end{lemma}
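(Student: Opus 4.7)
The plan is to prove the lemma by induction on $k$, exploiting the hierarchical structure of $D_{k,n}$: any non-level-$k$ edge of $D_{k,n}$ lives inside some copy $D_{k-1,n}^i$, and each vertex has exactly one neighbor outside its copy (its level-$k$ partner). This means the inductive step is almost bookkeeping, provided the base case is handled directly.

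For the base case $k=1$, I would verify both claims in $D_{1,n}$ by hand. A level $0$ edge $uv$ lies inside a single copy $D_{0,n}=K_n$, so $u$ and $v$ already share the remaining $n-2$ vertices of that $K_n$; the only other neighbors of $u$ and $v$ are their level-$1$ partners $u^1,v^1$, which are distinct (level-$1$ edges form a matching) and each lies in a copy of $K_n$ not containing both $u$ and $v$, so neither can be a common neighbor. A level $1$ edge $uv$ has $u$ and $v$ in two distinct $K_n$ copies, each with its unique level-$1$ partner on the other side, so no common neighbor is possible.

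For the inductive step, assume the lemma holds for $D_{k-1,n}$ and take an edge $uv$ in $D_{k,n}$. I would split into two cases. In the first case $uv$ is a level-$j$ edge with $0\le j\le k-1$; then $u,v$ lie in the same copy $D_{k-1,n}^i$, so the induction hypothesis provides $n-2$ common neighbors when $j=0$ and none when $j\ge 1$ within that copy. Outside $D_{k-1,n}^i$ each of $u,v$ has exactly one neighbor, namely its level-$k$ partner $u^k$ or $v^k$; since level-$k$ edges form a matching whose endpoints lie in distinct copies, we have $u^k\ne v^k$, and $u^k$ is not adjacent to $v$ (it is in a copy other than that containing $v$) and similarly for $v^k$, so no new common neighbor arises. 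In the second case $uv$ is a level-$k$ edge with $u\in D_{k-1,n}^a$, $v\in D_{k-1,n}^b$, $a\ne b$. The only neighbor of $v$ in $D_{k-1,n}^a$ is $u$ (since $v$ has a unique level-$k$ partner), and symmetrically the only neighbor of $u$ in $D_{k-1,n}^b$ is $v$; moreover neither $u$ nor $v$ has any neighbor in any copy $D_{k-1,n}^c$ with $c\ne a,b$. Hence $u$ and $v$ share no common neighbor.

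The main obstacle, though a minor one, is the careful verification in Case 1 of the inductive step that the level-$k$ partners $u^k$ and $v^k$ cannot accidentally coincide or sit inside $D_{k-1,n}^i$; both facts are immediate consequences of the matching structure of $E_k$ together with the rule from Definition \ref{def} that level-$k$ edges connect \emph{distinct} copies of $D_{k-1,n}$. Once these observations are in place, the induction closes cleanly without any case analysis on the explicit $uid$-labels.
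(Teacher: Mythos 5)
Your proof is correct. The paper's own argument is much shorter and non-inductive: for a level $0$ edge it observes that $u$ and $v$ share the $n-2$ remaining vertices of their common $K_n$, and then asserts that any further common neighbour $w$ outside that $K_n$ would create a triangle $uvwu$, ``which is impossible according to Definition 1''; the level $j\geq 1$ case is dismissed with ``similarly''. Your induction on $k$ reaches the same conclusion by a more explicit route: you track, level by level, that each vertex has exactly one neighbour outside its copy of $D_{k-1,n}$ (its level-$k$ partner), that these partners are pairwise distinct and land in distinct copies, and hence that no common neighbour can arise except inside the base $K_n$. In effect you are supplying the justification the paper leaves implicit --- namely why no triangle of $D_{k,n}$ can use an edge of level $j\geq 1$ --- so your argument is longer but self-contained, while the paper's is quicker but asks the reader to extract the triangle-freeness claim directly from the recursive definition. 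Both proofs ultimately rest on the same two structural facts (the base copies are the only complete pieces, and each level of the recursion attaches a perfect matching between copies), and your version closes cleanly with no gap.
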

\begin{proof} If $uv$ is a level 0 edge, then $uv$ lies in a complete subgraph $K_n$ ($D_{0,n}$) of $D_{k,n}$. Clearly, $u$ and $v$ have exactly $n-2$ common neighbors in this $K_n$. If $u$ and $v$ have another common neighbor $w$ outside this $K_n$, then a triangle $uvwu$ occurs, which is impossible according to Definition \ref{def}. Similarly, if $uv$ is a level $k$ edge, then $u$ and $v$ have no common neighbors. This completes the proof.
\end{proof}

\begin{theorem}\label{D-k-n-lambda-dash} $\lambda_2(D_{k,n})=2n+2k-4$ for all $n\geq2$ and $k\geq2$.
\end{theorem}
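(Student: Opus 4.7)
The plan is to show $\lambda_2(D_{k,n})=\xi_2(D_{k,n})=2n+2k-4$ by combining a direct upper bound with an inductive lower bound in the spirit of the proof of Theorem \ref{super-lambda}.

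For the upper bound, I would first observe that $D_{k,n}$ is $(n+k-1)$-regular, so every edge $uv$ yields a connected set $X=\{u,v\}$ with $|[X,\overline{X}]|=d(u)+d(v)-2=2n+2k-4$. This gives $\xi_2(D_{k,n})\le 2n+2k-4$, and regularity makes this value attained, so $\xi_2(D_{k,n})=2n+2k-4$. By the general inequality $\lambda_2(G)\le\xi_2(G)$ noted in the introduction, $\lambda_2(D_{k,n})\le 2n+2k-4$.

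For the lower bound I would induct on $k$. Suppose, toward a contradiction, that $F$ is a $2$-restricted edge-cut with $|F|\le 2n+2k-5$. Decompose $D_{k,n}$ into the $p+1$ copies $D_{k-1,n}^{i}$ as in Definition \ref{def}, and set $F^i=F\cap E(D_{k-1,n}^i)$ and $f^i=|F^i|$. If $f^i\le n+k-3$ for every $i$, then each $D_{k-1,n}^i-F^i$ is connected (since $\lambda(D_{k-1,n})=n+k-2$) and the quotient $K_{p+1}$ survives the removal of at most $2n+2k-5$ level-$k$ edges because $p=t_{k-1,n}$ is much larger than $2n+2k-5$; hence $D_{k,n}-F$ is connected, a contradiction. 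Otherwise, WLOG $f^0\ge n+k-2$, and I would split further on whether $f^0\ge 2n+2k-6=\lambda_2(D_{k-1,n})$ (where at most one edge of $F$ lies outside $D_{k-1,n}^0$, the outside is connected, and every component of $D_{k-1,n}^0-F^0$ of order at least two is attached to the outside via some level-$k$ edge not in $F$, while a singleton component would violate the $2$-restricted hypothesis) or $n+k-2\le f^0<2n+2k-6$ (where Theorem \ref{super-lambda}, applicable because $k-1\ge 2$, forces any disconnection of $D_{k-1,n}^0-F^0$ to isolate only singletons $v$, each of which must then use its level-$k$ edge $vv^k\notin F$ to rejoin the main component, again contradicting disconnectedness).

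The main obstacle will be the base case $k=2$, because $D_{1,n}$ is neither super-$\lambda$ nor super-$\lambda_2$: the $n$ level-$1$ edges leaving one $K_n$-block already form a non-trivial small edge-cut, so the Case~2 argument above cannot be reused verbatim with $D_{k-1,n}=D_{1,n}$. I would therefore treat $k=2$ separately by an \emph{ad hoc} analysis, exploiting the explicit ``$K_n$ blocks joined by a perfect matching of level-$1$ edges'' structure of $D_{1,n}$ to enumerate how many edges of $F$ can lie inside a single $D_{1,n}^i$ while still producing a $2$-restricted cut of size less than $2n$, and ruling each configuration out. Once the base case is established, the inductive step described above closes the proof and yields $\lambda_2(D_{k,n})=2n+2k-4$.
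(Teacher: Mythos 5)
Your upper bound is fine and matches the paper's. The lower bound, however, has a genuine gap exactly where you locate ``the main obstacle'': the base case $k=2$ of your induction is never carried out, and it cannot be absorbed into your inductive framework, because $D_{1,n}$ is not $\lambda_2$-optimal. The $n$ level-$1$ edges leaving a single $K_n$ block of $D_{1,n}$ form a $2$-restricted edge-cut, so $\lambda_2(D_{1,n})\le n<2n-2=\xi_2(D_{1,n})$ for $n\ge3$; hence the hypothesis ``$\lambda_2(D_{k-1,n})=2n+2k-6$'' that drives your sub-case split is false at $k=2$, and Theorem~\ref{super-lambda} is likewise unavailable for $D_{1,n}$. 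So $k=2$ is not a routine verification but the crux, and ``enumerate the configurations and rule each out'' is not yet a proof. A second, smaller issue: in the sub-case $n+k-2\le f^0<2n+2k-6$ you cite Theorem~\ref{super-lambda} to conclude that $F^0$ isolates only singletons, but super-$\lambda$ constrains only \emph{minimum} edge-cuts ($f^0=n+k-2$); for $f^0$ strictly between $\lambda$ and $\lambda_2$ of the copy you must argue separately that every non-singleton component of $D_{k-1,n}^0-F^0$ is large enough to retain a surviving level-$k$ edge (for instance, a component of order exactly $2$ already forces $f^0\ge 2(n+k-2)-2=2n+2k-6$, and orders $3,\dots$ need a similar degree count). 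Both points are repairable, but as written the proof is incomplete.

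For comparison, the paper does not induct on $\lambda_2$ at all. It uses only the ordinary edge-connectivity $\lambda(D_{k-1,n})=n+k-2$ to see that $D_{k,n}-V(D_{k-1,n}^0)-F$ is connected, and then, for a vertex $u$ trapped in $D_{k-1,n}^0$ together with a surviving neighbour $v$, it counts edge-disjoint paths of length at most two from $\{u,v\}$ out of $D_{k-1,n}^0$ through level-$k$ edges: Lemma~\ref{D-k-n-common-neighbor} yields $2(n+k-3)+2>|F|$ such paths when $uv$ has level at least $1$, and Lemma~\ref{Complete-super-lambda-dash} (super-$\lambda_2$ of $K_n$) controls the level-$0$ case. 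That argument is uniform in $k\ge2$ and is precisely what you would have to supply for your base case; once you have it, the induction becomes superfluous.
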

\begin{proof} Since $D_{k,n}$ is ($n+k-1$)-regular, we have $\xi_2(D_{k,n})=2n+2k-4$. Additionally, $D_{k,n}$ is not a star for $n\geq2$ and $k\geq2$, then $\lambda_2(D_{k,n})\leq\xi_2(D_{k,n})=2n+2k-4$. We only need to show that $\lambda_2(D_{k,n})\geq2n+2k-4$.

Let $F$ be any subset of edges in $D_{k,n}$ such that $|F|\leq2n+2k-5$ and there is no isolated vertex in $D_{k,n}-F$. We shall prove that $D_{k,n}-F$ is connected. We may assume that $|F|=2n+2k-5$. Suppose without loss of generality that $f^0$ is the largest one among $f^i$. Then $f^j\leq n+k-3$ for each $j\in\{1,\cdots,p\}$. Since $\lambda(D_{k-1,n})=n+k-2$, each of $D_{k-1,n}^j-F^j$ is connected. By contracting each $D_{k-1,n}^i$ into a singleton, we obtain a complete graph $K_{p+1}$. Note that $p\geq n(n+1)\cdots(n+k-1)$, we have $p>2n+2k-5$ whenever $n\geq2$ and $k\geq2$, which implies that $D_{k,n}-V(D_{k-1,n}^0)-F$ is connected. It remains to show that any vertex in $D_{k-1,n}^0-F^0$ is connected to a vertex in $D_{k,n}-V(D_{k-1,n}^0)-F$ via a fault-free path. If $D_{k-1,n}^0-F^0$ is connected, then $D_{k,n}-F$ is connected. We assume that $D_{k,n}-F$ is disconnected. Thus, $f^0\geq n+k-2$.

Suppose that $u$ is an arbitrary vertex in $D_{k-1,n}^0$. If $uu^k\not\in F$, we are done. So we assume that $uu^k\in F$. Since there exists no isolated vertex in $D_{k,n}-F$, there exist an edge $uv$ incident with $u$ ($v\neq u^k$) in $D_{k-1,n}^0$ such that $uv\not\in F^0$. Moreover, if $vv^k\not\in F$, we are done. So we assume that $vv^k\in F$. We consider the following two cases.

\begin{enumerate}[{Case }1:]
\item $uv$ is a level $l$ edge, $1\leq l\leq k-1$. By Lemma \ref{D-k-n-common-neighbor}, $u$ and $v$ have no common neighbors. Let $E_1=\{uu_l:uu_l\in E(D_{k-1,n}^0)\setminus\{uv\}\}$ and $E_2=\{vv_l:vv_l\in E(D_{k-1,n}^0)\setminus\{uv\}\}$, then $|E_1|=|E_2|=n+k-3$. It is not difficult to see that there are $n+k-3$ edge disjoint paths from $u$ (resp. $v$) to a vertex in $D_{k,n}-V(D_{k-1,n}^0)-F$ via $u_l$ (resp. $v_l$). Observe that $2(n+k-3)+2>2n+2k-5$, so $u$ is connected to a vertex in $D_{k,n}-V(D_{k-1,n}^0)-F$ via a fault-free path.

\item $uv$ is a level 0 edge. By Lemma \ref{D-k-n-common-neighbor}, $u$ and $v$ have exactly $n-2$ common neighbors $w_1,w_2,\cdots,$ $w_{n-2}$ in $D_{k-1,n}^0$. In fact, $w_1,w_2,\cdots,w_{n-2}$ are in a complete subgraph $K_n$ of $D_{k-1,n}^0$. Besides, $u$ (resp. $v$) has $k-1$ distinct neighbors $u_j$ (resp. $v_j$) outside $K_n$, $1\leq j\leq k-1$. So there exist $k-1$ edge disjoint paths $P_j=uu_ju_j^k$ (resp. $Q_j=vv_jv_j^k$) from $u$ (resp. $v$) to $u_j^k$ (resp. $v_j^k$), where $u_j^k$ (resp. $v_j^k$) is the level $k$ neighbor of $u_j$ (resp. $v_j$). If at least one of $P_j$ and $Q_j$ is fault-free in $D_{k,n}-F$, we are done. So we assume that each of $P_j$ and $Q_j$ has at least one edge in $F$.

There are at most $2n+2k-5-2\times(k-1)-2=2n-5$ edges of $F$ in the $K_n$ of $D_{k-1,n}^0$ that contains $uv$. Clearly, we only need to consider $n\geq3$ since $2n-5<0$ when $n=2$. Clearly, $K_n-F^0$ is connected since $2n-5=1$ when $n=3$. In addition, by Lemma \ref{Complete-super-lambda-dash}, $K_n$ is super-$\lambda_2$ when $n\geq4$. In other words, $\lambda_2(K_n)=2n-4$ when $n\geq4$. If $K_n-F^0$ is connected, for each $w_l$, $1\leq l\leq n-2$, there are $k-1$ distinct neighbors not in $D_{k-1,n}^0$ and exactly one level $k$ neighbor. Since $k(n-2)>2n-5$ whenever $n\geq3$ and $k\geq2$, there exists a fault-free path from $u$ to a vertex in $D_{k,n}-V(D_{k-1,n}^0)-F$. If $K_n-F^0$ is disconnected, it follows that $n\geq4$ and there exists a singleton, say $w_{n-2}$, in $K_n-F^0$. Then there are $k(n-3)$ edge disjoint paths from the large component of $K_n-F^0$ to $D_{k,n}-V(D_{k-1,n}^0)-F$. Since $k(n-3)>2n-5-(n-1)$ whenever $n\geq4$, the result follows.
\end{enumerate}

By above, we have shown that $D_{k,n}-F$ is connected, which implies that $\lambda_2(D_{k,n})\geq2n+2k-4$. Thus, the lemma follows.
\end{proof}

However, $D_{2,n}$ is not super-$\lambda_2$ when $n\geq3$ since the edges coming from a complete subgraph $K_n$, namely $D_{0,n}$, form a non-trivial minimum restricted edge-cut.

\begin{lemma}\label{D-2-2-super-lambda-dash} $D_{2,2}$ is super-$\lambda_2$.
\end{lemma}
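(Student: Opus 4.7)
The plan is to combine Theorem~\ref{D-k-n-lambda-dash} (which gives $\lambda_2(D_{2,2})=\xi_2(D_{2,2})=4$) with a case analysis driven by the natural decomposition of $D_{2,2}$ into its seven copies $D_{1,2}^0,\ldots,D_{1,2}^6$ of $D_{1,2}$. Fix any minimum $2$-restricted edge-cut $F$ (necessarily of size $4$), let $X$ be a component of $D_{2,2}-F$ with $|X|\le 21$ (one exists since $D_{2,2}-F$ has at least two components and $|V(D_{2,2})|=42$), and set $X^i=X\cap V(D_{1,2}^i)$. Let $c$ count the indices $i$ with $X^i$ proper and nonempty; since $\lambda(D_{1,2})=2$, each such copy contributes at least two edges to $F$, forcing $c\le 2$. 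The goal is to show $G[X]\cong K_2$.

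For $c=0$, the set $X$ is a disjoint union of $k$ whole copies with $1\le k\le 3$ (by $|X|\le 21$), and $|[X,\overline{X}]|$ equals the $k(7-k)\ge 6$ level-$2$ edges between the $X$-copies and the $\overline{X}$-copies, contradicting $|F|=4$. For $c=1$, letting $i^*$ be the partial copy and $a,b=6-a$ be the counts of the remaining six copies lying in $X,\overline{X}$, the cut satisfies $|[X,\overline{X}]|\ge 2+ab$, forcing $ab\le 2$; together with $|X|\le 21$, this pins $a=0$, so $X=X^{i^*}$ and $|[X,\overline{X}]|=\alpha+|X^{i^*}|\le 4$, where $\alpha\ge 2$ is the cut separating $X^{i^*}$ inside the $6$-cycle $D_{1,2}^{i^*}$. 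This in turn forces $|X^{i^*}|=2$ and $\alpha=2$, which means the two vertices are adjacent on the cycle, so $G[X]\cong K_2$.

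The main obstacle is the case $c=2$: the two internal cuts already saturate $|F|$, so $\alpha_1=\alpha_2=2$ and no level-$2$ edge lies in $F$; hence every level-$2$ edge stays on one side of the partition, which immediately gives $ab=0$, and $|X|\le 21$ rules out $a=5$. Thus all five outside copies lie in $\overline{X}$. For each of them, the unique level-$2$ edge into $D_{1,2}^{i^*}$ must land in $V(D_{1,2}^{i^*})\setminus X^{i^*}$, and these five endpoints are distinct, forcing $|X^{i^*}|\le 1$; symmetrically $|X^{i^{**}}|\le 1$, so both equal $1$. The remaining level-$2$ edge, namely the unique one between $D_{1,2}^{i^*}$ and $D_{1,2}^{i^{**}}$, must then connect the two singletons in order not to cross the boundary, so $X$ consists precisely of those two vertices joined by that edge, again giving $G[X]\cong K_2$. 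The crux is exploiting the uniqueness of the level-$2$ edge between each pair of copies, which pins down the single vertex of each partial copy that lies in $X$.
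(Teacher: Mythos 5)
Your proof is correct, but it takes a genuinely different route from the paper's. The paper proves a stronger, fault-tolerance-style statement: for \emph{any} edge set $F$ with $|F|=4$ such that $D_{2,2}-F$ has no isolated vertex, the graph $D_{2,2}-F$ is either connected or has an isolated edge; it argues this by case analysis on $f^0=\max_i f^i$ and by showing each surviving fragment of the worst copy $C_0$ reaches the large component $D_{2,2}-V(C_0)-F$. You instead work directly with a minimum $2$-restricted edge-cut $F=[X,\overline{X}]$ and run an isoperimetric count on the small side $X$: each copy met properly by $X$ costs at least two cycle edges, bounding the number $c$ of partial copies by $2$, and the uniqueness of the level-$2$ edge between each pair of copies (together with $|X|\le 21$) then forces $X$ to be an adjacent pair in one cycle ($c=1$) or two singletons joined by their level-$2$ edge ($c=2$). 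I checked the three cases and the counting is sound (in the $c=1$ case the identity $|F|=\alpha+|X^{i^*}|$ uses that every vertex lies on exactly one level-$2$ edge, and in the $c=2$ case connectivity of the component $X$ pins the remaining level-$2$ edge onto the two singletons). Your argument is cleaner and more self-contained for the lemma as literally stated, and it yields the structure of the small side directly rather than by exhaustion. The one thing to be aware of is that the paper later invokes this lemma in its stronger form (e.g.\ in Theorem~\ref{D-k-2-lambda-3}, for a $4$-edge set $F$ that is not a priori a restricted edge-cut); your version still suffices there, but only after the short extra observation that if $D_{2,2}-F$ has no component of order $\le 2$ and is disconnected, then the boundary of any component is a minimum $2$-restricted edge-cut, contradicting super-$\lambda_2$. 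It would be worth stating that reduction explicitly if your proof were to replace the paper's.
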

\begin{proof} Let $F$ be any edge subset of $D_{2,2}$ with $|F|=4$. We shall show that if $D_{2,2}-F$ contains no isolated vertex, then either $D_{2,2}-F$ is connected or $F$ isolates an edge of $D_{2,2}$. Notice that $D_{2,2}$ is constructed from seven disjoint 6-cycles ($D_{1,2}^i$), for convenience, denoted by $C_i$, $0\leq i\leq 6$. We may assume that $f^0$ is the largest one among $f^i$. We consider the following cases.

\begin{enumerate}[{Case }1:]
\item $f^0=1$. It is obvious that each $C_i-F^i$ is connected. By a similar argument of Case 1 in Theorem \ref{super-lambda}, it can be shown that $D_{2,2}-F$ is connected.

\item $f^0=2$. If $f^j\leq1$ for each $j\in\{0,\cdots,6\}\setminus\{i\}$, then $D_{2,2}-V(C_0)-F$ is connected. If $C_0-F^0$ contains a singleton $u$, then $u$ must be connected to $D_{2,2}-V(C_0)-F$. So we assume that $C_0-F^0$ contains an isolated edge $xy$. Furthermore, if one of the level 2 edges of $x$ and $y$ is not in $F$, then $xy$ is connected to $D_{2,2}-V(C_0)-F$; otherwise, $F$ isolates $xy$ in $D_{2,2}-F$. For each component of $C_0-F^0$ containing at least three vertices, clearly, it is connected to $D_{2,2}-V(C_0)-F$. So we assume that $f^j=2$ for some $j\in\{0,\cdots,6\}\setminus\{i\}$, say $j=1$. Since $C_0$ and $C_1$ are both 6-cycles, $C_0-F^0$ and $C_1-F^1$ have at most two components, respectively. Clearly, $F=F^0\cup F^1$ and $D_{2,2}-V(C_0)\cup V(C_1)-F$ is connected. If $u$ and $v$ are two singletons of $C_0-F^0$ and $C_1-F^1$, respectively, and $uv\in E(D_{2,2})$, then $F$ isolates $uv$ in $D_{2,2}-F$; otherwise, $D_{2,2}-F$ is connected.

\item $f^0\geq3$. Clearly, $D_{2,2}-V(C_0)-F$ is connected. If $C_0-F^0$ contains a singleton $u$, then $u$ is connected to $D_{2,2}-V(C_0)-F$ since $D_{2,2}-F$ contains no isolated vertex. If $C$ is a component of $C_0-F^0$ with $|C|\geq2$, then $C$ is connected to $D_{2,2}-V(C_0)-F$. Thus, $D_{2,2}-F$ is connected.
\end{enumerate}

Hence, the lemma follows.
\end{proof}

\begin{theorem}\label{D-k-n-super-lambda-dash} $D_{k,n}$ is super-$\lambda_2$ for $k\geq3$ and $n\geq2$, or $k=2$ and $n=2$.
\end{theorem}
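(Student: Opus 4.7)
By Lemma \ref{D-2-2-super-lambda-dash}, $D_{2,2}$ is super-$\lambda_2$, so the remaining task is to handle $k\geq 3$ and $n\geq 2$. In view of Theorem \ref{D-k-n-lambda-dash}, it suffices to take any $F\subseteq E(D_{k,n})$ with $|F|=2n+2k-4$ such that $D_{k,n}-F$ has no isolated vertex, and to show that either $D_{k,n}-F$ is connected or $F$ isolates a $K_2$. Following the pattern of Theorem \ref{D-k-n-lambda-dash}, I would split $D_{k,n}$ into the $p+1$ copies $D_{k-1,n}^i$, set $f^0=\max_i f^i$, and perform a case analysis on $f^0$.

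When $f^0\leq n+k-3$, each $D_{k-1,n}^i-F^i$ is connected (since $\lambda(D_{k-1,n})=n+k-2$), and contracting them yields a clique $K_{p+1}$ minus at most $2n+2k-4<p$ edges, which remains connected; so $D_{k,n}-F$ is connected. When $f^0=n+k-2$, Theorem \ref{super-lambda} (applicable since $k-1\geq 2$) tells us $D_{k-1,n}^0-F^0$ is either connected or isolates a single vertex $u$. A count of intact level-$k$ edges from $D_{k-1,n}^0$ (using $f_k<p$) shows that $D_{k,n}-F$ is connected unless some other copy $D_{k-1,n}^j$ also satisfies $f^j=n+k-2$; then $f^0+f^j=|F|$ forces $f_k=0$, both copies may isolate singletons $u,v$, and the only disconnecting configuration is $u^k=v$ (i.e., $uv$ is the level-$k$ edge between these copies), in which case $F$ isolates precisely the $K_2$ on $\{u,v\}$.

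When $f^0\geq n+k-1$, the remaining budget $|F|-f^0\leq n+k-3$ forces $f^j\leq n+k-3$ for every $j\geq 1$, so those copies minus their $F^j$ are all connected and the big part $B=D_{k,n}-V(D_{k-1,n}^0)-F$ is connected. For any component $C$ of $D_{k,n}-F$ lying inside $D_{k-1,n}^0$, we have $|C|(n+k-1)-2|E(C)|=|[C,\overline{C}]|_{D_{k,n}}\leq 2n+2k-4$. The crux of this case, and the main obstacle of the proof, is to bound $|E(C)|$ from above when $|C|\geq 3$. By Lemma \ref{D-k-n-common-neighbor}, every triangle of $D_{k,n}$ lies inside a single $K_n$-subgraph (equivalently, the subgraph spanned by level-$\geq 1$ edges is triangle-free), yielding $|E(C)|\leq 3$ for $|C|=3$, $|E(C)|\leq 4$ for $|C|=4$ with $n\leq 3$, $|E(C)|\leq 6$ for $|C|=4$ with $n\geq 4$, and analogous bounds for larger $|C|$ (triangle plus at most one level-$\geq 1$ edge per outside vertex, since two such edges would produce a triangle with a $K_n$-edge). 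Substituting these bounds into the boundary inequality and using $k\geq 3$, a short case-by-case check rules out every $|C|\geq 3$, so $|C|=2$ and $C\cong K_2$, which is the required conclusion.
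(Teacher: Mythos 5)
Your reduction to $k\geq3$, $n\geq2$ via Lemma \ref{D-2-2-super-lambda-dash}, your decomposition into the copies $D_{k-1,n}^i$, and your handling of the cases $f^0\leq n+k-3$ and $f^0=n+k-2$ all match the paper's argument (the paper's Case 1 is exactly your second case, including the observation that two bad copies can only disconnect the graph when the two isolated singletons are joined by the level-$k$ edge between those copies). The divergence, and the problem, is in your case $f^0\geq n+k-1$. The paper does \emph{not} count edges of an arbitrary component $C$; it splits further on $f^0$ and reuses its recursive machinery: for $n+k-1\leq f^0\leq 2n+2k-7<\lambda_2(D_{k-1,n})$, Theorems \ref{super-lambda} and \ref{D-k-n-lambda-dash} force $D_{k-1,n}^0-F^0$ to have only a singleton cut off, and for $f^0\geq 2n+2k-6$ at most two edges of $F$ lie outside $D_{k-1,n}^0$, so any component of $D_{k-1,n}^0-F^0$ on at least three vertices retains an intact level-$k$ edge into the big part. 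That second observation disposes of all large components in one line, with no edge-counting at all.

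Your substitute--the isoperimetric inequality $|C|(n+k-1)-2|E(C)|=|[V(C),\overline{V(C)}]|\leq|F|$--has a genuine gap. (First, a small but real point: the displayed identity only holds if $|E(C)|$ means $|E(D_{k,n}[V(C)])|$, induced edges including faulty ones, not the edges of the component; your triangle bounds do apply to the induced count, so read it that way.) The real issue is the scope of the ``short case-by-case check.'' A component $C\subseteq V(D_{k-1,n}^0)$ forces $|C|$ level-$k$ edges plus an edge cut of $D_{k-1,n}^0$ into $F$, which only bounds $|C|\leq n+k-2$; this is unbounded as $n+k$ grows, so you must verify the inequality for every $3\leq|C|\leq n+k-2$, and your edge bounds are only worked out for $|C|\in\{3,4\}$. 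For larger $|C|$, ``a triangle plus at most one level-$\geq1$ edge per outside vertex'' is not a correct description of the extremal induced subgraphs (e.g.\ $C$ could be a full $K_c$ inside one $K_n$, or several $K_n$-blocks joined by a matching of higher-level edges), and establishing the needed bound on $|E(D_{k,n}[X])|$ for connected $X$ of arbitrary size is essentially the computation of $\xi_c(D_{k,n})$ for all $c$ -- a nontrivial lemma you would have to state and prove. There is also a boundary case your generic bounds miss: for $(k,n)=(3,2)$ and $|C|=3$ the bound $|E(C)|\leq3$ gives $3(n+k-1)-6=6=|F|$, which is not a contradiction; you need the triangle-freeness of $D_{k,2}$ (so $|E(C)|\leq2$) to close it. Either supply the general induced-edge lemma, or adopt the paper's route of bounding $f^0$ against $\lambda_2(D_{k-1,n})$ and against $|F|-2$.
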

\begin{proof} Let $F$ be any edge subset of $D_{k,n}$ with $|F|=2n+2k-4$. We keep the notation introduced in Theorem \ref{D-k-n-lambda-dash}. By Lemma \ref{D-2-2-super-lambda-dash}, it suffices to consider $k\geq3$ and $n\geq2$. We shall show that if $D_{k,n}-F$ contains no isolated vertex, then either $D_{k,n}-F$ is connected or $F$ isolates an edge of $D_{k,n}$. If each of $D_{k-1,n}^i-F^i$ is connected for $i\in\{0,1,\cdots,p\}$, then $D_{k,n}-F$ is connected. So we assume that one of $D_{k-1,n}^i-F^i$ is disconnected, say $D_{k-1,n}^0-F^0$. We consider the following cases.

\begin{enumerate}[{Case }1:]
\item $f^0=n+k-2$. Clearly, $\sum_{j=1}^p f^j\leq n+k-2$.  Furthermore, if each of $D_{k-1,n}^j-F^j$, $j\in\{1,\cdots,p\}$, is connected, then $D_{k,n}-V(D_{k-1,n}^0)-F$ is connected. By Theorem \ref{super-lambda}, $F^0$ isolates a singleton $u$ of $D_{k-1,n}^0$. Since there exists no isolated vertex in $D_{k,n}-F$, $u$ must connect to a vertex in $D_{k,n}-V(D_{k-1,n}^0)-F$. It is not hard to see that there exists a vertex of the larger part of $D_{k-1,n}^0-F^0$ connecting to a vertex in $D_{k,n}-V(D_{k-1,n}^0)-F$. Thus, $D_{k,n}-F$ is connected.

Now we may assume that $D_{k-1,n}^1-F^1$ is disconnected. At this time, $D_{k,n}-V(D_{k-1,n}^0)\cup V(D_{k-1,n}^1)-F$ is connected. Again, $F^0$ (resp. $F^1$) isolates a singleton $u$ (resp. $v$) of $D_{k-1,n}^0$ (resp. $D_{k-1,n}^1$). If $uv\in E(D_{k,n})$, then $F$ isolates an edge in $D_{k,n}$; otherwise, $D_{k,n}-F$ is connected.

\item $n+k-1\leq f^0\leq 2n+2k-7$. Clearly, $\sum_{i=1}^p f^i\leq n+k-3$ when $n+k\geq6$. Then $D_{k,n}-V(D_{k-1,n}^0)-F$ is connected. By Theorems \ref{super-lambda} and \ref{D-k-n-lambda-dash}, $F^0$ isolates exactly one singleton $u$ of $D_{k-1,n}^0$ when $n+k\geq6$. Hence $D_{k-1,n}^0-F^0$ contains two components $C_1$ and $C_2$, where $C_1$ is the singleton $u$. Since there is no isolated vertex in $D_{k,n}-F$, $u$ is connected to a vertex in $D_{k,n}-V(D_{k-1,n}^0)-F$. Obviously, there exists a vertex of $C_2$ connecting to a vertex in $D_{k,n}-V(D_{k-1,n}^0)-F$. Therefore, $D_{k,n}-F$ is connected.

Note that $2n+2k-7=n+k-2=3$ when $k=3$ and $n=2$, the proof is analogous to that of Case 1. Note also that $n+k-1=4$ when $k=3$ and $n=2$, $F^0$ may isolate a singleton or an isolated edge of $D_{2,2}^0$ since $\lambda_2(D_{2,2})=4$. As mentioned earlier, we only consider that $F^0$ isolates an isolated edge, say $xy$, of $D_{2,2}^0$. If $xx^k\in F$ and $yy^k\in F$ hold, then $F$ isolates an isolated edge of $D_{3,2}$; otherwise, $D_{k,n}-F$ is connected.

\item $f^0\geq 2n+2k-6$. It suffices to consider $D_{k,n}$ with $n+k\geq6$ since $2n+2k-6=4$ when $k=3$ and $n=2$. If $u$ is an isolated vertex in $D_{k-1,n}^0-F^0$, then the level $k$ edge $uu^k\not\in F$, which implies that $u$ is connected to a vertex in $D_{k,n}-V(D_{k-1,n}^0)-F$. So we assume that $uv$ is an isolated edge in $D_{k-1,n}^0-F^0$. If the level $k$ edges $uu^k\not\in F$ or $vv^k\not\in F$, then $u$ or $v$ is connected to a vertex in $D_{k,n}-V(D_{k-1,n}^0)-F$; otherwise, $F$ isolates an edge of $D_{k,n}$. For any component $C$ of $D_{k-1,n}^0-F^0$ with $|V(C)|\geq3$, noting that at most two edges are deleted outside $D_{k-1,n}^0$, each vertex in $C$ has a neighbor in $D_{k,n}-V(D_{k-1,n}^0)-F$. It implies that $D_{k,n}-F$ is connected. Thus, the theorem follows.
\end{enumerate}
\end{proof}

In what follows, we shall consider 3-restricted edge-connectivity of $D_{k,n}$. The following lemma is needed.

\begin{lemma}\label{Complete-super-lambda-3}\cite{Balbuena} The complete graph $K_n$ is super-$\lambda_3$ for $n\geq 6$.
\end{lemma}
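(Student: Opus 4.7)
The plan is to reduce the super-$\lambda_3$ property to an elementary optimization over component sizes in $K_n-F$. I would start by pinning down the value of $\xi_3(K_n)$: any $3$-subset $X\subseteq V(K_n)$ induces a $K_3$ (hence is connected) and has exactly $3(n-3)$ edges leaving it, so $\xi_3(K_n)=3n-9$, and consequently $\lambda_3(K_n)\le 3n-9$.

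To prove the matching lower bound together with super-$\lambda_3$, let $F$ be an arbitrary $3$-restricted edge-cut of $K_n$, and let $X_1,\dots,X_r$ be the components of $K_n-F$ with $s_i=|X_i|\ge 3$. Since $K_n$ is complete, every non-edge of a component contributes to $F$, and counting cut edges twice gives
\[
2|F|=\sum_{i=1}^{r}s_i(n-s_i).
\]
First I would observe that this sum is minimized in the $r=2$ case: adding a third part of size $\ge 3$ strictly increases $\sum s_i(n-s_i)$, since splitting a part of size $s\ge 6$ into two parts of sizes $3$ and $s-3$ replaces $s(n-s)$ by $3(n-3)+(s-3)(n-s+3)$, which is strictly larger. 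So we may assume $r=2$ with $s_1+s_2=n$, giving $|F|=s_1(n-s_1)$ for some $3\le s_1\le n-3$.

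Next I would minimize $s(n-s)$ for $3\le s\le n-3$. Rewriting
\[
s(n-s)-3(n-3)=(s-3)(n-s-3),
\]
both factors are nonnegative exactly when $3\le s\le n-3$, which is a valid range once $n\ge 6$. Hence $|F|\ge 3n-9$, proving $\lambda_3(K_n)=3n-9=\xi_3(K_n)$. Moreover, equality forces $(s_1-3)(n-s_1-3)=0$, i.e.\ $s_1=3$ or $s_1=n-3$, so one component has exactly $3$ vertices, and that component, being a subgraph of $K_n$ on $3$ vertices, is a $K_3$. This is exactly what super-$\lambda_3$ requires.

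I do not anticipate a real obstacle here: the argument is entirely combinatorial and the only non-immediate step is verifying that adding more than two components only increases the cut size, which I would handle by the one-line splitting computation above (or, equivalently, by noting that for fixed total size $n$ with each $s_i\ge 3$, the expression $\sum s_i(n-s_i)=n^2-\sum s_i^2$ is maximized — hence $|F|$ is minimized — when the partition is as unbalanced as possible, i.e.\ $r=2$ and one part of size $3$). The hypothesis $n\ge 6$ is needed exactly to ensure that the interval $[3,n-3]$ for $s_1$ is non-empty, so the bound is tight.
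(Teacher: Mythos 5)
Your argument is correct. Note, however, that the paper does not prove this lemma at all: it is imported verbatim from the cited reference (Balbuena et al.), so there is no in-paper proof to compare against. What you have written is a self-contained elementary proof that the paper omits, and it is sound: the identity $\xi_3(K_n)=3n-9$, the reduction to two components via the merging/splitting computation (merging parts of sizes $a,b$ changes $\sum s_i(n-s_i)$ by $-2ab<0$, so the minimum over partitions into parts of size at least $3$ is attained at $r=2$), the factorization $s(n-s)-3(n-3)=(s-3)(n-s-3)\ge 0$ on $[3,n-3]$, and the equality analysis forcing one component of size exactly $3$ (necessarily a $K_3$, hence connected) all check out. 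The only slip is cosmetic: $2|F|=\sum_i s_i(n-s_i)$ should be $2|F|\ge\sum_i s_i(n-s_i)$, since $F$ may a priori contain edges internal to a component; the inequality is the direction you actually use for the lower bound, and in the equality case $|F|=3n-9$ the chain of inequalities collapses and shows $F$ consists exactly of the cross edges, which is what the super-$\lambda_3$ conclusion needs. With that one symbol corrected, your proof stands as a complete replacement for the external citation.
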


\begin{theorem}\label{D-k-n-lambda-3} $\lambda_3(D_{k,n})=3n+3k-9$ for all $n\geq3$ and $k\geq3$.
\end{theorem}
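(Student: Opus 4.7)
The plan is to establish $\lambda_3(D_{k,n}) = 3n+3k-9$ by proving the upper and lower bounds separately. For the upper bound, since $n\ge 3$, every embedded $D_{0,n}\cong K_n$ inside $D_{k,n}$ contains a triangle $T$; taking $X=V(T)$ gives $|[X,\overline X]| = 3(n+k-1)-6 = 3n+3k-9$, and the complement is still a single large component, so $[X,\overline X]$ is a valid $3$-restricted edge-cut. Moreover, by Lemma \ref{D-k-n-common-neighbor} the only triangles of $D_{k,n}$ sit inside such $K_n$-copies, so every connected $3$-set is either such a triangle or a $P_3$, the latter contributing $3(n+k-1)-4 = 3n+3k-7$ boundary edges. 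Hence $\xi_3(D_{k,n}) = 3n+3k-9$, and by the Bonsma bound $\lambda_3 \le \xi_3$ cited in the introduction, $\lambda_3(D_{k,n})\le 3n+3k-9$.

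For the lower bound I would argue by contradiction: assume $F\subseteq E(D_{k,n})$ satisfies $|F|\le 3n+3k-10$ and $D_{k,n}-F$ is disconnected with every component of size at least $3$. Following the template of Theorems \ref{super-lambda}, \ref{D-k-n-lambda-dash} and \ref{D-k-n-super-lambda-dash}, I would decompose $D_{k,n}$ into its $p+1$ copies $D_{k-1,n}^i$, set $f^i = |F\cap E(D_{k-1,n}^i)|$, and assume $f^0 = \max_i f^i$. The analysis splits into three ranges. If $f^0\le n+k-3$, every $D_{k-1,n}^i-F^i$ is connected and the contracted graph is $K_{p+1}$; since $p>3n+3k-10$, $D_{k,n}-F$ is connected, a contradiction. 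If $n+k-2\le f^0\le 2n+2k-7$, the remaining budget on the other copies is small enough that $D_{k,n}-V(D_{k-1,n}^0)-F$ stays connected, and Theorems \ref{super-lambda} and \ref{D-k-n-super-lambda-dash} applied inductively to $D_{k-1,n}$ restrict the small components of $D_{k-1,n}^0-F^0$ to singletons or isolated edges, each of which reconnects via fault-free level-$k$ edges unless the extra level-$k$ edges in $F$ push $|F|$ past $3n+3k-10$. If $f^0\ge 2n+2k-6$, the outside budget is at most $n+k-4$, so $D_{k,n}-V(D_{k-1,n}^0)-F$ is again connected; the only surviving obstruction is a $3$-vertex component $C$ of $D_{k-1,n}^0-F^0$ with all three of its level-$k$ edges in $F$, and a direct count using Lemma \ref{D-k-n-common-neighbor} shows $|F^0|\ge 3(n-3)+3(k-1) = 3n+3k-12$, giving $|F|\ge 3n+3k-9$, the desired contradiction.

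The principal obstacle will be this last sub-case, specifically verifying that cutting off a triangle $C$ sitting inside a single $K_n\cong D_{0,n}$ of $D_{k-1,n}^0$ is the unique extremal configuration and costs exactly $3n+3k-12$ non-level-$k$ edges. Lemma \ref{Complete-super-lambda-3} only covers $n\ge 6$, so the values $n\in\{3,4,5\}$ will require ad hoc enumeration based on Lemma \ref{D-k-n-common-neighbor} to control triangle placement inside the $K_n$. The delicate bookkeeping is to show that whenever even one of the three level-$k$ edges at $C$ is fault-free, $C$ reconnects to the large outside component, thereby eliminating all non-triangle obstructions and forcing the extremal cut to mirror the upper-bound construction.
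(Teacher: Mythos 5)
Your upper bound and overall architecture (decompose into the $p+1$ copies, order the $f^i$, case on how the fault budget is distributed) match the paper, but the lower bound has two genuine gaps. First, in your middle range $n+k-2\le f^0\le 2n+2k-7$ the assertion that ``the remaining budget on the other copies is small enough that $D_{k,n}-V(D_{k-1,n}^0)-F$ stays connected'' is false: since $f^0$ is only required to be the maximum, one can have $f^0=f^1=n+k-2$ whenever $2(n+k-2)\le 3n+3k-10$, i.e.\ $n+k\ge 6$, so a second copy $D_{k-1,n}^1-F^1$ may also be disconnected. The paper treats this as a separate case (exactly two copies disconnected): there $\sum_{i\ge2}f^i\le n+k-6$, each damaged copy loses exactly one singleton by $\lambda_2(D_{k-1,n})=2n+2k-6$, and one must check whether the two singletons are joined by a level-$k$ edge (which would create a forbidden isolated edge). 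Your proposal omits this branch entirely.

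Second, and more seriously, your claim that in the range $f^0\ge 2n+2k-6$ ``the only surviving obstruction is a $3$-vertex component $C$ with all three level-$k$ edges in $F$'' is precisely the hard content of the theorem, and the proposal does not establish it. When all of $F$ concentrates in one copy, $f^0$ can reach $3n+3k-10\ge\lambda_3(D_{k-1,n})=3n+3k-12$, so $D_{k-1,n}^0-F^0$ may be shattered into components of size $3$ or larger of various shapes, and you must show every such component retains a fault-free level-$k$ edge. The paper proves this via a Claim that fixes an arbitrary vertex $u$, extracts a path $u,v,w$ in $D_{k-1,n}^0-F^0$ (using the no-isolated-vertex/edge hypothesis), and splits into three sub-cases according to whether $uv$ and $vw$ are level-$0$ edges, counting edge-disjoint length-two escape paths through level-$k$ edges and invoking super-$\lambda_2$ and super-$\lambda_3$ of $K_n$ (Lemmas \ref{Complete-super-lambda-dash} and \ref{Complete-super-lambda-3}) to control how badly the ambient $K_n$ can be disconnected, with separate ad hoc treatment of $n\in\{4,5\}$. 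Your ``direct count'' $3(n-3)+3(k-1)=3n+3k-12$ is correct for a triangle inside a $K_n$, but it does not rule out trapped components of size $\ge 4$ or non-triangle configurations; some uniform estimate (or the paper's path-counting) is indispensable here. You also do not dispatch the boundary case $n=k=3$, where $3n+3k-10=2n+2k-4$ and the paper falls back on Theorem \ref{D-k-n-super-lambda-dash} before the main case analysis even begins.
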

\begin{proof} Pick out a path $P$ of length two or a triangle $C$ of $D_{k,n}$ for $n\geq3$ and $k\geq3$. Clearly, $\lambda_3(D_{k,n})\leq\min\{|[V(P),\overline{V(P)}]|,|[V(C),\overline{V(C)}]|\}=3n+3k-9$. It suffices to prove that $\lambda_3(D_{k,n})\geq 3n+3k-9$.

Let $F\subset E(D_{k,n})$ with $|F|=3n+3k-10$ such that there are neither isolated vertices nor
isolated edges in $D_{k,n}-F$. Our objective is to show that $D_{k,n}-F$ is connected. Observe that $3(n+k-2)>3n+3k-10$ and $D_{k-1,n}^i$ is ($n+k-2$) edge-connected, then at most two of $D_{k-1,n}^i-F^i$, $0\leq i\leq p$, are disconnected. In fact, $2n+2k-4=3n+3k-10=8$ when $n=3$ and $k=3$, by Theorem \ref{D-k-n-super-lambda-dash}, it implies that $D_{k,n}-F$ is connected. So we assume that $n\geq3$ and $k\geq4$, or $n\geq4$ and $k\geq3$. We consider the following cases.

\begin{enumerate}[{Case }1:]
\item For each $0\leq i\leq p$, $D_{k-1,n}^i-F^i$ is connected. Since $p=|D_{k-1,n}|\geq n(n+1)\cdots(n+k-1)$, we have $p>3n+3k-10$ whenever $n\geq3$ and $k\geq3$, by the proof of the Case 1 of Theorem \ref{super-lambda}, $D_{k,n}-F$ is connected.

\item Exactly one of $D_{k-1,n}^i-F^i$ is disconnected. We may assume that $D_{k-1,n}^0-F^0$ is disconnected. Then $f^0\geq n+k-2$. Since each of $D_{k-1,n}^i-F^i$ ($i\neq0$) is connected, we can obtain that $D_{k,n}-V(D_{k-1,n}^0)-F$ is connected. We need the following claim.

\noindent{\bf Claim.} Each vertex in $D_{k-1,n}^0-F^0$ is connected to a vertex in $D_{k,n}-V(D_{k-1,n}^0)-F$ via a fault-free path in $D_{k,n}-F$.

\noindent{\bf Proof of the Claim:} Let $u$ be an arbitrary vertex in $D_{k-1,n}^0-F^0$. If $e_k(u)\not\in F$, we are done. So we assume that $e_k(u)\in F$. Since there are no isolated vertices in $D_{k-1,n}^0-F^0$, there is an edge $uv\in E(D_{k-1,n}^0)$ such that $uv\not\in F$. If $e_k(v)\not\in F$, we are done. Similarly, we assume that $e_k(v)\in F$. Moreover, there are no isolated edges in $D_{k-1,n}^0-F^0$, then there is an edge $uw$ or $vw$, say $vw$, in $D_{k-1,n}^0-F^0$. Again, if $e_k(w)\not\in F$, we are done. So we assume that $e_k(w)\in F$. We consider the following three conditions.

(1) Both of $uv$ and $vw$ are level 0 edges. That is, $u,v$ and $w$ are vertices of some $K_n$ in $D_{k-1,n}^0$. In addition, $u$ (resp. $v$, $w$) has $k-1$ distinct neighbors $u_j$ (resp. $v_j$, $w_j$) in $D_{k-1,n}^0$ but outside $K_n$, $1\leq j\leq k-1$. So there exist $k-1$ edge disjoint paths $P_j=uu_ju_j^k$ (resp. $Q_j=vv_jv_j^k$, $W_j=ww_jw_j^k$) from $u$ (resp. $v$, $w$) to $u_j^k$ (resp. $v_j^k$, $w_j^k$), where $u_j^k$ (resp. $v_j^k$, $w_j^k$) is a level $k$ neighbor of $u_j$ (resp. $v_j$, $w_j$). If at least one of $P_j$, $Q_j$ and $W_j$ is fault-free in $D_{k,n}-F$, we are done. So we assume that each of $P_j$, $Q_j$ and $W_j$ has at least one edge in $F$.

There are at most $3n+3k-10-(3\times(k-1)+3)=3n-10$ edges of $F$ in the $K_n$. Since $3n-10<0$ when $n=3$, we need only to consider $n\geq4$. In addition, by Lemmas \ref{Complete-super-lambda-dash} and \ref{Complete-super-lambda-3}, $K_n$ is super-$\lambda_2$ and super-$\lambda_3$ when $n\geq4$ and $n\geq6$, respectively. In other words, $\lambda_2(K_n)=2n-4$ when $n\geq4$ and $\lambda_3(K_n)=3n-9$ when $n\geq6$. If the $K_n-F$ containing $uv$ is connected, for each vertex $x_l$ of $K_n$ (not $u$, $v$ and $w$), $1\leq l\leq n-3$, there are $k$ edge disjoint paths from $x_l$ to a vertex in $D_{k,n}-V(D_{k-1,n}^0)-F$. Since $k(n-3)>3n-10$ whenever $n\geq4$ and $k\geq3$, there exists a fault-free path from $u$ to a vertex in $D_{k,n}-V(D_{k-1,n}^0)-F$, we are done. Now we assume that $K_n-F$ is disconnected. Since $n-1>3n-10$ when $n=4$, we only need to consider $n\geq5$.

There is exactly one singleton in $K_5-F$ since $3n-10<2n-4$ when $n=5$. It is not difficult to see the claim holds. So we assume that $n\geq6$. When at most $3n-10$ edges are deleted from $K_{n}$, the resulting graph is either connected, or contains exactly two components, one of which is a singleton or an edge, or contains exactly three components, two of which are singletons. Let the component of $K_n-F$ containing $u$, $v$ and $w$ be $C$, then $C$ contains at least $n-5$ vertices except $u$, $v$ and $w$. There are at least $n-1$ edges to separate $C$ from $K_n$. Note $k(n-5)\geq(3n-10)-(n-1)$ when $n\geq6$. In fact, $k(n-5)>(3n-10)-(n-1)$ when $n\geq7$. When $n=6$, if we take $n-1$ on the right side, the left side is $k(n-4)$. Therefore, there exists a fault-free path from $u$ to a vertex in $D_{k,n}-V(D_{k-1,n}^0)-F$.

(2) Either $uv$ or $vw$ is a level 0 edge, but not both. Without loss of generality, suppose that $uv$ is a level 0 edge. Similarly, $u$ has $k-1$ distinct neighbors $u_{j_1}$ in $D_{k-1,n}^0$ but outside $K_n$, and $v$ (resp. $w$) has $k-2$ distinct neighbors $v_{j_2}$ (resp. $w_{j_2}$) in $D_{k-1,n}^0$ but outside $K_n$, where $1\leq j_1\leq k-1$, $1\leq j_2\leq k-2$, $v_{j_2}\neq w$ and $w_{j_2}\neq v$. So there exist $k-1$ edge disjoint paths $P_{j_1}=uu_{j_1}u_{j_1}^k$ from $u$ to $u_{j_1}^k$, and there exist $k-2$ edge disjoint paths $Q_{j_2}=vv_{j_2}v_{j_2}^k$ (resp. $W_{j_2}=ww_{j_2}w_{j_2}^k$) from $v$ to $v_{j_2}^k$ (resp. $w$ to $w_{j_2}^k$). If at least one of $P_{j_1}$, $Q_{j_2}$ and $W_{j_2}$ is fault-free in $D_{k,n}-F$, we are done. So we assume that each of $P_{j_1}$, $Q_{j_2}$ and $W_{j_2}$ has at least one edge in $F$.

For convenience, we denote the $K_n$ containing $u$ and $v$ by $K_n^1$ and the $K_n$ containing $w$ by $K_n^2$, respectively. Thus, there are at most $3n+3k-10-(k-1+2\times(k-2)+3)=3n-8$ edges of $F$ in the $K_n^1$ and $K_n^2$. If one of $K_n^1-F$ and $K_n^2-F$, say $K_n^1-F$, is connected, then there are at least $k(n-2)$ edge disjoint paths from vertices of $K_n^1$ (except $u$ and $v$) to vertices in $D_{k,n}-V(D_{k-1,n}^0)-F$. Since $k(n-2)>3n-8$ whenever $n\geq3$, there exists a fault-free path from $u$ to a vertex in $D_{k,n}-V(D_{k-1,n}^0)-F$, we are done. So we assume that both of $K_n^1-F$ and $K_n^2-F$ are disconnected. Since $n-1+(2n-4)>3n-8$ when $n\geq3$, each of $K_n^1-F$ and $K_n^2-F$ has a singleton. Clearly, $|F\cap E(K_n^1)|\leq 3n-8-(n-1)=2n-7$. It suffices to consider $n\geq6$ since $2n-7<n-1$ when $n<6$. At this time, there are at least $k(n-3)$ edge disjoint paths from vertices of $K_n^1$ (except $u$, $v$ and the singleton in $K_n^1-F$) to vertices in $D_{k,n}-V(D_{k-1,n}^0)-F$. It is obvious that $k(n-3)>2n-7$ whenever $n\geq6$ and $k\geq3$, then there exists a fault-free path from $u$ to a vertex in $D_{k,n}-V(D_{k-1,n}^0)-F$.

(3) Neither $uv$ nor $vw$ is a level 0 edge. Noting that $D_{k-1,n}^0$ is $(n+k-2)$-regular, then $u$ (resp. $v$,$w$) has at least $n+k-4$ neighbors $u_l$ (resp. $v_l$,$w_l$), $1\leq l\leq n+k-4$, in $D_{k-1,n}^0$, where $u_l\neq v,w$, $v_l\neq u,w$ and $w_l\neq u,v$. Thus, there are $n+k-4$ edge-disjoint paths $uu_ju^k_j$ (resp. $vv_jv^k_j$, $ww_jw^k_j$) of length two from $u$ (resp. $v$,$w$) to $u^k_j$ (resp. $v^k_j$,$w^k_j$). There are at least $3(n+k-4)+3=3n+3k-9>3n+3k-10$ edge disjoint paths from $u$, $v$ and $w$ to $D_{k,n}-V(D_{k-1,n}^0)-F$, which implies that the claim holds. \qed

By the above claim, it follows that $D_{k,n}-F$ is connected.

\item Exactly two of $D_{k-1,n}^i-F^i$ are disconnected. We may assume that $D_{k-1,n}^0-F^0$ and $D_{k-1,n}^1-F^1$ are disconnected. Then $f^0\geq n+k-2$ and $f^1\geq n+k-2$. It follows that $\sum_{i=2}^pf^i\leq 3n+3k-10-2(n+k-2)=n+k-6$. Clearly, $D_{k,n}-V(D_{k-1,n}^0)\cup V(D_{k-1,n}^1)-F$ is connected. We may assume that $f^0\geq f^1$. Then $f^0\leq 3n+3k-10-(n+k-2)=2n+2k-8$. By Theorem \ref{D-k-n-lambda-dash}, $\lambda_2(D_{k-1,n}^i)=2n+2(k-1)-4=2n+2k-6>2n+2k-8$, then $D_{k-1,n}^0-F^0$ (resp. $D_{k-1,n}^1-F^1$) contains exactly one singleton $x$ (resp. $y$). Clearly, $D_{k,n}-\{x,y\}-F$ is connected. If $xy\in E(D_{k,n})$, then $D_{k,n}-F$ contains an isolated edge or a singleton, a contradiction. Thus, $xy\not\in E(D_{k,n})$, then $D_{k,n}-F$ is connected since there exist no isolated vertices in $D_{k,n}-F$. This completes the proof.
\end{enumerate}
\end{proof}

Similarly, $D_{3,n}$ is not super-$\lambda_3$ when $n\geq4$ since the edges coming from a subgraph $K_n$, namely $D_{0,n}$, form a non-trivial minimum restricted edge cut. We shall consider $D_{k,n}$ for all $k\geq4$ and $n\geq3$ and obtain the following result.

\begin{theorem}\label{D-k-n-super-lambda-3} $D_{k,n}$ is super-$\lambda_3$ for all $k\geq4$ and $n\geq3$.
\end{theorem}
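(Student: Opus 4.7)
The plan is to follow the structure of the proof of Theorem~\ref{D-k-n-super-lambda-dash}, equipped now with the additional invariants $\lambda_3(D_{k-1,n})=3n+3k-12$ from Theorem~\ref{D-k-n-lambda-3} and $\xi_3(D_{k,n})=3n+3k-9$. Fix $F\subseteq E(D_{k,n})$ with $|F|=3n+3k-9$ and assume that $D_{k,n}-F$ has no component of order less than three. I will show that either $D_{k,n}-F$ is connected or $F$ isolates a connected three-vertex subgraph of $D_{k,n}$; since a $P_3$ has boundary $3n+3k-7>3n+3k-9$, this isolated subgraph must be a $K_3$. With the usual notation $f^i=|F\cap E(D_{k-1,n}^i)|$, relabel so that $f^0=\max_i f^i$. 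Since $3(n+k-2)>3n+3k-9$, at most two of the $D_{k-1,n}^i-F^i$ can be disconnected.

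If $f^0\le n+k-3$, every copy $D_{k-1,n}^i-F^i$ remains connected, the contracted graph $K_{p+1}$ (with $p>3n+3k-9$) survives the removal of $|F|$ level-$k$ edges, and $D_{k,n}-F$ is connected. So I may assume $f^0\ge n+k-2$ and that $D_{k-1,n}^0-F^0$ is disconnected. Suppose first that $D_{k-1,n}^0-F^0$ is the only disconnected copy, so that $D_{k,n}-V(D_{k-1,n}^0)-F$ is connected. I use the previously established invariants for $D_{k-1,n}$: super-$\lambda$ with $\lambda=n+k-2$ (Theorem~\ref{super-lambda}), super-$\lambda_2$ with $\lambda_2=2n+2k-6$ (Theorem~\ref{D-k-n-super-lambda-dash}), and $\lambda_3=3n+3k-12$ (Theorem~\ref{D-k-n-lambda-3}); all are applicable since $k-1\ge 3$ and $n\ge 3$. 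When $n+k-2\le f^0\le 2n+2k-7$, any disconnection of $D_{k-1,n}^0-F^0$ exhibits a singleton $u$ (as $f^0<\lambda_2$), and $uu^k\notin F$ is forced to avoid $u$ being a singleton in $D_{k,n}-F$; the large component reaches $D_{k,n}-V(D_{k-1,n}^0)-F$ through unused level-$k$ edges. When $f^0=2n+2k-6$, super-$\lambda_2$ confines the small component to a singleton or an edge $uv$; in the edge case the two distinct (by Lemma~\ref{D-k-n-common-neighbor}) level-$k$ edges $uu^k,vv^k$ cannot both lie in $F$, so $\{u,v\}$ attaches outward. When $2n+2k-5\le f^0\le 3n+3k-13$, $f^0<\lambda_3(D_{k-1,n})$, so any disconnection yields a component of order at most two, handled by the same attachment argument.

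The delicate subcase is $3n+3k-12\le f^0\le 3n+3k-9$, where the residual budget $|F|-f^0\in\{0,1,2,3\}$ is extremely tight. A small component $C$ of $D_{k-1,n}^0-F^0$ of order $s\in\{1,2\}$ must have at least one of its $s$ level-$k$ edges outside $F$, otherwise $C$ would survive as an order-$s$ component of $D_{k,n}-F$. Using Lemma~\ref{D-k-n-common-neighbor} to guarantee that the level-$k$ edges from distinct vertices are pairwise distinct and share no endpoints with interior edges, a direct accounting shows that the only configuration consistent with $|F|=3n+3k-9$ is $f^0=3n+3k-12$, $D_{k-1,n}^0-F^0$ has a $K_3$-component $C$, and the three residual edges of $F$ are exactly the three level-$k$ edges incident with $C$; this realises precisely the super-$\lambda_3$ outcome. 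Finally, if both $D_{k-1,n}^0-F^0$ and $D_{k-1,n}^1-F^1$ are disconnected, then $f^0+f^1\ge 2(n+k-2)$ and each $f^i\le 2n+2k-7<\lambda_2(D_{k-1,n})$, so each $F^i$ isolates only a singleton, say $x\in D_{k-1,n}^0$ and $y\in D_{k-1,n}^1$. If $xy\in E(D_{k,n})$ then $\{x,y\}$ is an isolated edge of $D_{k,n}-F$, contradicting the hypothesis; otherwise the distinct level-$k$ edges $xx^k,yy^k$ lie outside $F$ and attach $x,y$ to the connected remainder $D_{k,n}-V(D_{k-1,n}^0)\cup V(D_{k-1,n}^1)-F$.

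The hard part will be the tight subcase $3n+3k-12\le f^0\le 3n+3k-9$: the residual budget is so small that many ad hoc configurations must be eliminated, and one must rule out every placement of the $\le 3$ remaining edges except the unique one isolating a $K_3$ in $D_{k,n}$. Lemma~\ref{D-k-n-common-neighbor} is the essential tool there, as it ensures that level-$k$ edges and common neighbors behave additively so that boundary edges can be counted without overlap.
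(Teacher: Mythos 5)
Your proposal is correct and follows essentially the same route as the paper: split into copies of $D_{k-1,n}$, note at most two copies can be disconnected, case on the size of $f^0$ using the super-$\lambda$ and super-$\lambda_2$ properties of $D_{k-1,n}$ to confine small components to singletons or edges (which attach outward by the no-isolated-vertex/edge hypothesis), and in the tight range $f^0\geq 3n+3k-12$ use the boundary count (triangle $3n+3k-9$ versus $P_3$ $3n+3k-7$) to force the isolated order-3 component to be a $K_3$. The only cosmetic difference is that you additionally invoke $\lambda_3(D_{k-1,n})$ for the intermediate range where the paper cites only super-$\lambda$ and super-$\lambda_2$; the substance is the same.
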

\begin{proof} Let $F$ be any edge subset of $D_{k,n}$ with $|F|=3n+3k-9$. We keep the notation introduced in Theorem \ref{D-k-n-lambda-3}. We shall show that if $D_{k,n}-F$ contains neither isolated vertices nor isolated edges, then either $D_{k,n}-F$ is connected or $F$ isolates a triangle of $D_{k,n}$. Observe that $3(n+k-2)>3n+3k-9$ and $D_{k-1,n}^i$ is ($n+k-2$) edge-connected, then at most two of $D_{k-1,n}^i-F^i$, $0\leq i\leq p$, are disconnected. Suppose without loss of generality that $f^0$ is the largest one among $f^i$. If each $D_{k-1,n}^i-F^i$ is connected, then $D_{k,n}-F$ is connected. So we consider the following cases.

\begin{enumerate}[{Case }1:]
\item $n+k-2\leq f^0\leq 2n+2k-7$. We may assume that $D_{k-1,n}^0-F^0$ is disconnected. Furthermore, if each of $D_{k-1,n}^j-F^j$, $j\in\{1,\cdots,p\}$, is connected, then $D_{k,n}-V(D_{k-1,n}^0)-F$ is connected. By Theorem \ref{super-lambda}, $F^0$ isolates a singleton $u$ of $D_{k-1,n}^0$. Since there exists no isolated vertex in $D_{k,n}-F$, $u$ must connect to a vertex in $D_{k,n}-V(D_{k-1,n}^0)-F$. It is not hard to see that there exists a vertex in $D_{k-1,n}^0-u-F^0$ connecting to a vertex in $D_{k,n}-V(D_{k-1,n}^0)-F$. Thus, $D_{k,n}-F$ is connected.

Now we assume that one of $D_{k-1,n}^j-F^j$, say $D_{k-1,n}^1-F^1$, is disconnected. At this time, $D_{k,n}-V(D_{k-1,n}^0)\cup V(D_{k-1,n}^1)-F$ is connected. Again, $F^0$ (resp. $F^1$) isolates a singleton $u$ (resp. $v$) of $D_{k-1,n}^0$ (resp. $D_{k-1,n}^1$). If $uv\in E(D_{k,n})$, then $F$ isolates an edge in $D_{k,n}$, a contradiction; otherwise, $D_{k,n}-F$ is connected.

\item $2n+2k-6\leq f^0\leq 3n+3k-13$. Clearly, $D_{k,n}-V(D_{k-1,n}^0)-F$ is connected. By Theorems \ref{super-lambda} and \ref{D-k-n-super-lambda-dash}, it can be shown that $D_{k-1,n}^0-F^0$ consists of at most two singletons $u$ and $v$, or exactly one isolated edge $uv$. If $D_{k-1,n}^0-F^0$ consists of a singleton $u$. Since there exists no isolated vertex in $D_{k,n}-F$, $u$ must connect to a vertex in $D_{k,n}-V(D_{k-1,n}^0)-F$. If $D_{k-1,n}^0-F^0$ consists of exactly one isolated edge $uv$, then $uu^k\not\in F$ or $vv^k\not\in F$, indicating that $D_{k,n}-F$ is connected.

\item $f^0\geq 3n+3k-12$. Obviously, $D_{k,n}-V(D_{k-1,n}^0)-F$ is connected and $|F\cap E_k|\leq3$. If $D_{k-1,n}^0-F^0$ contains a singleton or an isolated edge, by our assumption, then it must connect to a vertex in $D_{k,n}-V(D_{k-1,n}^0)-F$. So we assume that each component $C$ of $D_{k-1,n}^0-F^0$ has order at least three. If $|C|=3$ and there exists a vertex $u\in V(C)$ such that $uu^k\not\in F$, then $C$ is connected to $D_{k,n}-V(D_{k-1,n}^0)-F$, we are done. Suppose not. Then $C$ is a component of $D_{k,n}-F$. Meanwhile $f^0=3n+3k-12$. This implies that $F$ isolates a triangle $C$. If $|C|>3$, then $C$ is obviously connected to $D_{k,n}-V(D_{k-1,n}^0)-F$, indicating that $D_{k,n}-F$ is connected.
\end{enumerate}
\end{proof}

Observe that $D_{k,2}$ is triangle-free, we consider its 3-restricted edge-connectivity as follows.

\begin{theorem}\label{D-k-2-lambda-3} $\lambda_3(D_{k,2})=3n+3k-7$ for all $k\geq2$.
\end{theorem}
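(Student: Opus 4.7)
The plan is to establish matching upper and lower bounds of $3n+3k-7$, leveraging the fact that $D_{k,2}$ is triangle-free. For the upper bound, Lemma~\ref{D-k-n-common-neighbor} with $n=2$ shows that every edge of $D_{k,2}$ has no common-neighbor endpoints (the level-$0$ case yields $n-2=0$ common neighbors and the level-$j\geq 1$ case yields zero by the same lemma), so $D_{k,2}$ is triangle-free. Hence every connected induced subgraph on three vertices is a path $P = uvw$, and $|[V(P),\overline{V(P)}]| = 3(k+1) - 2\cdot 2 = 3k-1 = 3n+3k-7$, yielding $\xi_3(D_{k,2}) = 3n+3k-7$. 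The inequality $\lambda_3(G) \leq \xi_3(G)$ of Bonsma et al.\ cited in the introduction then gives $\lambda_3(D_{k,2}) \leq 3n+3k-7$.

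For the lower bound I would show, by induction on $k$, that any $F \subseteq E(D_{k,2})$ with $|F| = 3k-2$ whose removal leaves $D_{k,2} - F$ with no component of order at most two must leave $D_{k,2} - F$ connected. The base case $k=2$ is immediate from Theorem~\ref{D-k-n-super-lambda-dash}: with $|F| = 4 = \lambda_2(D_{2,2})$ and $D_{2,2}$ super-$\lambda_2$, any such $F$ either fails to disconnect $D_{2,2}$, isolates a vertex, or isolates an edge, and only the first possibility is consistent with the hypothesis.

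For the inductive step $k \geq 3$, decompose $D_{k,2}$ into the $p+1$ copies $D_{k-1,2}^i$ and mirror the three-case skeleton of Theorems~\ref{D-k-n-lambda-3} and~\ref{D-k-n-super-lambda-3}. Since $\lambda(D_{k-1,2}) = k$ and $|F| = 3k-2 < 3k$, at most two of the $D_{k-1,2}^i - F^i$ can be disconnected. Case~1 (all connected) reduces to the connectedness of $K_{p+1}$ after at most $3k-2$ edge deletions, which holds because $p+1 > 3k-1$. Case~2 (exactly one copy, say $D_{k-1,2}^0$, disconnected) is handled by an adaptation of the Claim inside Theorem~\ref{D-k-n-lambda-3}; the analysis simplifies because the subcase in which the auxiliary chain $u \to v \to w$ lies entirely within a single $K_n$ is vacuous for $n=2$ (as $K_2$ has only two vertices), leaving only the mixed and non-level-$0$ subcases to check, with the counting bounds becoming easier.

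The principal obstacle is Case~3, where both $D_{k-1,2}^0 - F^0$ and $D_{k-1,2}^1 - F^1$ are disconnected, so $f^0 + f^1 \geq 2k$ and at most $k-2$ faulty edges lie outside. Theorem~\ref{D-k-n-lambda-dash} gives $\lambda_2(D_{k-1,2}) = 2k-2$, hence any copy with $f^i \leq 2k-3$ must isolate a singleton, while $f^i = 2k-2$ (which forces $f^{1-i} = k$ and no faulty edges elsewhere) invokes the super-$\lambda_2$-ness from Theorem~\ref{D-k-n-super-lambda-dash} to pin down the small component as a singleton or an edge. In every subcase, each singleton must have its unique external (level-$k$) edge fault-free (otherwise it would be isolated in $D_{k,2}-F$), so the small fragments attach to the large remnant through their level-$k$ neighbors; the only configuration that blocks this is two singletons $u \in D_{k-1,2}^0$ and $v \in D_{k-1,2}^1$ that are level-$k$ neighbors of each other, but then $\{u,v\}$ would itself be an isolated edge of $D_{k,2}-F$, contradicting the hypothesis.
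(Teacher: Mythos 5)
Your proposal is correct and follows essentially the same route as the paper: the same upper bound via the edge boundary of a path of length two, the same base case $k=2$ settled by the super-$\lambda_2$ property of $D_{2,2}$, and the same decomposition into copies of $D_{k-1,2}$ with a case split on how many copies $D_{k-1,2}^i-F^i$ are disconnected, using $\lambda_2(D_{k-1,2})=2k-2$ and super-$\lambda_2$-ness to pin the small components down to a singleton or an isolated edge and then routing them out through fault-free level-$k$ edges. The only cosmetic difference is that your ``induction on $k$'' never invokes the inductive hypothesis on $\lambda_3(D_{k-1,2})$ --- it relies only on the previously established $\lambda$, $\lambda_2$ and super-$\lambda_2$ results, exactly as the paper's direct argument does.
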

\begin{proof}We pick out a path $P$ of length two of $D_{k,2}$ for $k\geq2$. Clearly, $\lambda_3(D_{k,2})\leq\min\{|[V(P),\overline{V(P)}]|\}=3n+3k-7$. It suffices to prove that $\lambda_3(D_{k,2})\geq 3n+3k-7$.

Let $F\subset E(D_{k,2})$ with $|F|=3n+3k-8$ such that there are neither isolated vertices nor
isolated edges in $D_{k,2}-F$. Our aim is to show that $D_{k,2}-F$ is connected. Observe that $3(n+k-2)>3n+3k-8$ and $D_{k-1,n}^i$ is ($n+k-2$) edge-connected, then at most two of $D_{k-1,n}^i-F^i$, $0\leq i\leq p$, are disconnected. In fact, $2n+2k-4=3n+3k-8=4$ when $k=2$ and $n=2$, by Lemma \ref{D-2-2-super-lambda-dash}, which implies that $D_{2,2}-F$ is connected. So we assume that $k\geq3$. Since $p=|D_{k-1,n}|\geq n(n+1)\cdots(n+k-1)$, we have $p>3n+3k-8$ whenever $n=2$ and $k\geq3$. For each $0\leq i\leq p$, if $D_{k-1,n}^i-F^i$ is connected, by the proof of Case 1 of Theorem \ref{super-lambda}, then $D_{k,n}-F$ is connected. We assume that there exists some $i$, $0\leq i\leq p$, such that $D_{k-1,n}^i-F^i$ is disconnected. We consider the following cases.

\begin{enumerate}[{Case }1:]
\item Exactly one of $D_{k-1,n}^i-F^i$ is disconnected. We may assume that $D_{k-1,n}^0-F^0$ is disconnected. Then $f^0\geq n+k-2$. Since each of $D_{k-1,n}^i-F^i$ ($i\neq0$) is connected, we can obtain that $D_{k,n}-V(D_{k-1,n}^0)-F$ is connected. We claim that each vertex in $D_{k-1,n}^0-F^0$ is connected to a vertex in $D_{k,n}-V(D_{k-1,n}^0)-F$ via a path in $D_{k,n}-F$.

Let $u$ be an arbitrary vertex in $D_{k-1,n}^0-F^0$. If $e_k(u)\not\in F$, we are done. So we assume that $e_k(u)\in F$. Since there are no isolated vertices in $D_{k-1,n}^0-F^0$, there is an edge $uv\in E(D_{k-1,n}^0)$ such that $uv\not\in F$. If $e_k(v)\not\in F$, we are done. Similarly, we assume that $e_k(v)\in F$. Moreover, there are no isolated edges in $D_{k-1,n}^0-F^0$. Then there is an edge $uw$ or $vw$, say $vw$, in $D_{k-1,n}^0-F^0$. Again, if $e_k(w)\not\in F$, we are done. So we assume that $e_k(w)\in F$.

Noting that $D_{k-1,n}^0$ is $(n+k-2)$-regular, then $u$ (resp. $w$) has $n+k-3$ neighbors $u_{l_1}$ (resp. $w_{l_1}$), $1\leq l_1\leq n+k-3$, in $D_{k-1,n}^0$, where $u_{l_1}\neq v,w$ and $w_{l_1}\neq u,v$. Similarly, $v$ has $n+k-4$ neighbors $v_{l_2}$, $1\leq {l_2}\leq n+k-4$, in $D_{k-1,n}^0$, where $v_{l_2}\neq v,w$.
Thus, there are $n+k-3$ edge-disjoint paths $uu_{l_1}u_{l_1}^k$ (resp. $ww_{l_1}w_{l_1}^k$) of length two from $u$ (resp. $w$) to $u_{l_1}^k$ (resp. $w_{l_1}^k$) and $n+k-4$ edge-disjoint paths $vv_{l_2}v_{l_2}^k$ of length two from $v$ to $v_{l_2}^k$. There are $2(n+k-3)+n+k-4+3=3n+3k-7>3n+3k-8$ edge disjoint paths from $u$, $v$ and $w$ to $D_{k,n}-V(D_{k-1,n}^0)-F$ in total, which implies that the claim holds. Thus, $D_{k,n}-F$ is connected.
\item Exactly two of $D_{k-1,n}^i-F^i$ are disconnected. We may assume that $D_{k-1,n}^0-F^0$ and $D_{k-1,n}^1-F^1$ are disconnected. Then $f^0\geq n+k-2$ and $f^1\geq n+k-2$. It follows that $\sum_{i=2}^pf^i\leq 3n+3k-8-2(n+k-2)=n+k-4$. Clearly, $D_{k,n}-V(D_{k-1,n}^0)\cup V(D_{k-1,n}^1)-F$ is connected. We may assume that $f^0\geq f^1$. Then $f^0\leq 3n+3k-8-(n+k-2)=2n+2k-6$. By Theorem \ref{D-k-n-lambda-dash}, $\lambda_2(D_{k-1,n}^i)=2n+2(k-1)-4=2n+2k-6$, then $D_{k-1,n}^0-F^0$ contains exactly one singleton $x$ or one isolated edge $xy$ and $D_{k-1,n}^1-F^1$ contains exactly one singleton $z$. If $D_{k-1,n}^0-F^0$ contains exactly one isolated edge $xy$, then $F\cap E_k=\emptyset$. Obviously, $D_{k,n}-F$ is connected. We assume that $D_{k-1,n}^0-F^0$ contains exactly one singleton $x$ and $D_{k-1,n}^1-F^1$ contains exactly one singleton $z$. By our assumption, $e_k(x)\not\in F$ and $e_k(z)\not\in F$. If $x^k=z$, then $xz\in E(D_{k,n})$, which implies that $xz$ is an isolated edge in $D_{k,n}-F$, a contradiction; otherwise, $x^k\neq z$, then it is not difficult to see that $D_{k,n}-F$ is connected.
\end{enumerate}
This completes the proof.
\end{proof}

\begin{lemma}\label{D-2-2-super-lambda-3} $D_{2,2}$ is super-$\lambda_3$.
\end{lemma}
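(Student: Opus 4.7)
The plan is to adapt the case analysis of Theorem~\ref{D-k-n-super-lambda-3} to $D_{2,2}$, which is triangle-free, so that the only connected subgraphs of order~$3$ are $3$-vertex paths $P_3$. By Theorem~\ref{D-k-2-lambda-3} we have $\lambda_3(D_{2,2})=\xi_3(D_{2,2})=5$; hence for any $F\subseteq E(D_{2,2})$ with $|F|=5$ such that $D_{2,2}-F$ has neither an isolated vertex nor an isolated edge, I must show that either $D_{2,2}-F$ is connected or $F$ isolates a $P_3$.

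I would decompose $D_{2,2}$ into its seven $6$-cycles $C_0,\ldots,C_6$ (the copies of $D_{1,2}$), set $F^i=F\cap E(C_i)$, $f^i=|F^i|$, and $g=|F\cap E_2|$, and assume without loss of generality that $f^0\ge f^1\ge\cdots\ge f^6$. Because each $C_i$ is $2$-edge-connected and $3\cdot 2>5$, at most two of the $C_i-F^i$ can be disconnected. The argument then splits into three cases: (i) every $C_i-F^i$ is connected, in which case contracting each $C_i$ to a vertex yields $K_7$ with at most $5$ of its edges deleted, which remains connected, so $D_{2,2}-F$ is connected; (ii) only $C_0-F^0$ is disconnected; and (iii) both $C_0-F^0$ and $C_1-F^1$ are disconnected.

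In Case~(ii), $D_{2,2}-V(C_0)-F$ is connected by contracting $C_1,\ldots,C_6$ into vertices of $K_6$, from which at most $g\le 3$ edges are removed. Removing $f^0$ edges from the $6$-cycle $C_0$ gives exactly $f^0$ path components, and a component $X$ of $C_0-F^0$ fails to reach the outside iff all $|X|$ level-$2$ edges leaving $X$ lie in $F$. For $|X|\in\{1,2\}$ the no-isolated-vertex and no-isolated-edge hypotheses forbid this; for $|X|\ge 3$ the budget $|X|\le g\le 5-f^0$ combined with $f^0\ge 2$ only allows $|X|=3$, $f^0=2$, $g=3$, which is exactly the configuration where $C_0-F^0$ splits into two $P_3$'s with all three level-$2$ edges of one of them in $F$ -- the desired super-$\lambda_3$ conclusion. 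Case~(iii) is tighter: $f^0+f^1\ge 4$ forces $\sum_{i\ge 2}f^i+g\le 1$, so $(f^0,f^1)\in\{(2,2),(3,2)\}$; the sub-cases with $g=0$ are immediate, while the single residual sub-case $g=1$ with $f^0=f^1=2$ either gives connectivity, contradicts the hypotheses by isolating an edge between singletons of $C_0-F^0$ and $C_1-F^1$ joined by the surviving level-$2$ edge, or yields an isolated $P_3$ spanning $C_0$ and $C_1$ through a single level-$2$ edge.

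The main obstacle is to recognize that the tight configurations -- $f^0=2,g=3$ in Case~(ii) and the cross-cycle $P_3$ in the $g=1$ sub-case of Case~(iii) -- are not counterexamples but precisely the super-$\lambda_3$ outputs required by the lemma. Once this is observed, the remainder is routine bookkeeping on the budget $5=\sum_i f^i+g$, driven by the key principle that a component of order $q$ in any $C_i-F^i$ can be cut off from the rest of $D_{2,2}$ only if $g$ absorbs all $q$ of its outgoing level-$2$ edges, and the fact that $D_{2,2}$ is triangle-free so the isolated subgraph of order $3$ produced by any extremal $F$ must be a $P_3$.
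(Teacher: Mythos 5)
Your argument is correct and uses the same basic machinery as the paper's proof: split $D_{2,2}$ into the seven $6$-cycles $C_0,\dots,C_6$, note that at most two of the $C_i-F^i$ can be disconnected, and do bookkeeping on the budget $5=\sum_i f^i+|F\cap E_2|$. The paper organizes its cases by the value of the largest $f^i$ rather than by the number of damaged cycles, but that is cosmetic.

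There is, however, one substantive difference that deserves mention. In your Case~(iii) with $f^0=f^1=2$ and $g=1$ you identify a second extremal configuration: an isolated path $x_2\,\text{--}\,x_1\,\text{--}\,v$ with $x_1x_2\in E(C_0)$ and $x_1v$ the unique level-$2$ edge between $C_0$ and $C_1$. Its boundary has exactly $2+1+2=5$ edges (two cycle edges at $x_2$'s side and $x_2$'s level-$2$ edge, $x_1$'s other cycle edge, and $v$'s two cycle edges), so it is a genuine minimum $3$-restricted edge-cut that isolates a $P_3$, hence consistent with super-$\lambda_3$. The paper's own proof overlooks this case: in the sub-case where both $C_0-F^0$ and $C_1-F^1$ are disconnected it asserts that every component with at least two vertices has a vertex joined to $D_{2,2}-V(C_0)\cup V(C_1)-F$, which fails precisely for the component $\{x_1,x_2\}$ above (one of its two level-$2$ edges lies in $F$ and the other leads only to the singleton $v$ of $C_1-F^1$). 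Your treatment closes that gap. The one place you are terser than you should be is the claim that the $g=0$ sub-cases of Case~(iii) are ``immediate'': one still needs the uniqueness of the $C_0$--$C_1$ level-$2$ edge together with the no-isolated-vertex/edge hypotheses to rule out a small component trapped between the two damaged cycles, but these are exactly the facts you deploy elsewhere, so this is a matter of exposition rather than a gap.
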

\begin{proof} By Theorem \ref{D-k-2-lambda-3}, let $F$ be any edge subset of $D_{2,2}$ with $|F|=5$. We shall show that if $D_{2,2}-F$ contains no singleton and no isolated edge, then either $D_{2,2}-F$ is connected or $F$ isolates a path of length two of $D_{2,2}$. For simplicity, we denote each $D_{1,2}^i$ by $C_i$, $0\leq i\leq 6$. We may assume that $f^0$ is the largest one among $f^i$. If each $C_i-F^i$ is connected, it can be shown that $D_{2,2}-F$ is connected. So we assume that $C_0-F^0$ is disconnected.

\begin{enumerate}[{Case }1:]
\item $2\leq f^0\leq3$. If $C_j-F^j$ is connected for every $j\geq1$, then $D_{2,2}-V(C_0)-F$ is connected. If $C_0-F^0$ contains a singleton $u$ (resp. an edge $xy$), then $u$ (resp. $xy$) must be connected to $D_{2,2}-V(C_0)-F$. So we assume that $C$ is a component of $C_0-F^0$ with $|V(C)|\geq3$. If $|V(C)|=3$ and exactly three level 2 edges incident to $C$ are contained in $F$, then $F$ isolates a path of length 2 in $D_{2,2}-F$ (this implies that $f^0=2$); otherwise, $D_{2,2}-F$ is connected. Obviously, a vertex of $C$ is connected to a vertex in $D_{2,2}-V(C_0)-F$ when $|V(C)|\geq4$. Thus, $D_{2,2}-F$ is connected.

We assume that exactly one of $C_j-F^j$, say $C_1-F^1$ is disconnected for some $j\in\{1,\cdots,6\}$. Clearly, $D_{2,2}-V(C_0)\cup V(C_1)-F$ is connected. It follows that $|F\cap E_2|\leq 1$. This implies that for any component of $C_0-F^0$ and $C_1-F^1$ having at least two vertices, there exists a vertex connecting to $D_{2,2}-V(C_0)\cup V(C_1)-F$. So we only consider the singletons in $C_0-F^0$ and $C_1-F^1$. Let $u$ and $v$ be two singletons of $C_0-F^0$ and $C_1-F^1$, respectively. If $uv\in E(D_{2,2})$, then $F$ isolates $uv$ in $D_{2,2}-F$, a contradiction; otherwise, $D_{2,2}-F$ is connected.
\item $4\leq f^0\leq5$. Clearly, $D_{2,2}-V(C_0)-F$ is connected. If $C_0-F^0$ contains a singleton $u$, then $u$ is connected to $D_{2,2}-V(C_0)-F$ since $D_{2,2}-F$ contains no singleton. If $C$ is a component of $C_0-F^0$ with $|V(C)|\geq2$, then $C$ is connected to $D_{2,2}-V(C_0)-F$ since $|F\cap E_2|\leq1$. Thus, $D_{2,2}-F$ is connected.
\end{enumerate}
\end{proof}

\begin{theorem}\label{D-k-2-super-lambda-3} $D_{k,2}$ is super-$\lambda_3$ for all $k\geq2$.
\end{theorem}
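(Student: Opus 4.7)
The plan is induction on $k$, with base case $k=2$ given by Lemma \ref{D-2-2-super-lambda-3}. For $k\geq 3$, let $F\subseteq E(D_{k,2})$ with $|F|=3k-1=\lambda_3(D_{k,2})$, and suppose $D_{k,2}-F$ has no isolated vertex and no isolated edge; I must show either $D_{k,2}-F$ is connected or $F$ isolates a $P_3$ (the only 3-vertex connected subgraph, since $D_{k,2}$ is triangle-free). By the inductive hypothesis and Theorems \ref{super-lambda}, \ref{D-k-n-super-lambda-dash}, I may use that $D_{k-1,2}$ is super-$\lambda$, super-$\lambda_2$, and super-$\lambda_3$ with respective values $\lambda=k$, $\lambda_2=2k-4$, $\lambda_3=3k-4$.

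I follow the framework of Theorem \ref{D-k-n-super-lambda-3}. Let $f^0=\max_i f^i$. Since $3(n+k-2)=3k>|F|$, at most two copies $D_{k-1,2}^i-F^i$ are disconnected, and the usual contraction to $K_{p+1}$ (with $p=t_{k-1,2}\gg 3k-1$) shows that the ``outer'' subgraph $D_{k,2}-V(D_{k-1,2}^0)-F$, or $D_{k,2}-V(D_{k-1,2}^0)\cup V(D_{k-1,2}^1)-F$ in the two-bad-copy case, is connected. The real work is to attach every component of the bad copy (or copies) to this outer piece via a surviving level-$k$ edge.

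When only $D_{k-1,2}^0-F^0$ is disconnected, I split on $f^0$ by the three thresholds $\lambda,\lambda_2,\lambda_3$ of $D_{k-1,2}$. For $k\leq f^0\leq 2k-5$ (requires $k\geq 5$), super-$\lambda$ forces $F^0$ to isolate a single vertex $u$, and the no-isolated-vertex hypothesis yields $e_k(u)\notin F$, so $u$ connects out. For $2k-4\leq f^0\leq 3k-5$, $F^0$ is not a $3$-restricted edge-cut, so every component of $D_{k-1,2}^0-F^0$ has size $1$, $2$, or is ``large''; the boundary bound $|[C,\bar C]|\geq 4k-8$ for any triangle-free connected $4$-vertex subgraph rules out isolated size-$\geq 3$ small components when $k\geq 4$, and for $k=3$ it relies on the absence of $C_4$ in $D_{2,2}$ (verifiable by casework on the $uid_1$ labelling). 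Small components then connect out by hypothesis, and the large one by edge-counting against $|F\setminus F^0|$. For $f^0\geq 3k-4$, $|F\setminus F^0|\leq 3$ so only components of size at most $3$ can be isolated in $D_{k,2}-F$; sizes $1,2$ are excluded by hypothesis, and a size-$3$ component must be a $P_3$ whose three level-$k$ edges all lie in $F$, giving exactly the allowed conclusion that $F$ isolates a $P_3$ (this forces $f^0=3k-4$ and super-$\lambda_3$ of $D_{k-1,2}$ certifies the $P_3$ structure).

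The remaining case has $f^0,f^1\geq k$ with both $D_{k-1,2}^0-F^0$ and $D_{k-1,2}^1-F^1$ disconnected, so $|F\setminus(F^0\cup F^1)|\leq k-1$. Mirroring Case 2 of Theorem \ref{D-k-2-lambda-3}, super-$\lambda$ and super-$\lambda_2$ restrict each bad copy to carving out a singleton or an isolated edge, and each such piece must connect out by the no-isolated-vertex/edge hypothesis. The only possible obstruction is cross-copy singletons $u\in V(D_{k-1,2}^0)$, $v\in V(D_{k-1,2}^1)$ with $uv=e_k(u)=e_k(v)$, which would make $uv$ an isolated edge in $D_{k,2}-F$, contradicting the hypothesis. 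The main obstacle I anticipate is the regime $f^0=3k-4$: one must track the three ``budget'' edges in $F\setminus F^0$ precisely and argue that if they do not all fall on the three level-$k$ edges incident to the $P_3$ produced by super-$\lambda_3$, then that $P_3$ connects out and $D_{k,2}-F$ is connected.
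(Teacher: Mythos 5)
Your proposal follows essentially the same route as the paper's proof: split $D_{k,2}$ into the $p+1$ copies of $D_{k-1,2}$, bound the number of disconnected copies by edge-counting, case on $f^0$ against the thresholds $\lambda$, $\lambda_2$, $\lambda_3$ of $D_{k-1,2}$, attach each small component to the connected outer part via a surviving level-$k$ edge using the no-isolated-vertex/edge hypothesis, and recognize the isolated $P_3$ as arising exactly when $f^0=3k-4$ and all three level-$k$ edges of the carved-out path lie in $F$. Two details need repair. First, $\lambda_2(D_{k-1,2})=2\cdot2+2(k-1)-4=2k-2$, not $2k-4$ (your values $\lambda(D_{k-1,2})=k$ and $\lambda_3(D_{k-1,2})=3k-4$ are right); your regimes still cover all values of $f^0$, but the singleton-only regime actually extends up to $f^0\leq 2k-3$. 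Second, in the two-bad-copies case your claim that super-$\lambda$ and super-$\lambda_2$ force each bad copy to shed only a singleton or an isolated edge fails for $k=3$: there $f^0=5=\lambda_3(D_{2,2})$ and $f^1=3$ (or $f^0=f^1=4$) are forced, so $F^0$ may cut a path of length two off $D_{2,2}^0$. The paper treats this $D_{3,2}$ sub-case separately, and the rescue is that $F\cap E_k=\emptyset$ there, so every vertex of every small component keeps its level-$k$ edge and the only possible obstruction is the cross-copy isolated edge you already excluded; you should add this observation to close that case.
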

\begin{proof} By Lemma \ref{D-2-2-super-lambda-3}, it remains to consider $k\geq3$. Let $F$ be any edge subset of $D_{k,2}$ with $|F|=3n+3k-7$. We keep the notation introduced in Theorem \ref{D-k-n-lambda-3}. We shall show that if $D_{k,2}-F$ contains neither isolated vertices nor isolated edges, then either $D_{k,2}-F$ is connected or $F$ isolates a path of length two of $D_{k,2}$. Observe that $3(n+k-2)>3n+3k-7$ and $D_{k-1,2}^i$ is ($n+k-2$) edge-connected, then at most two of $D_{k-1,2}^i-F^i$, $0\leq i\leq p$, are disconnected. Suppose without loss of generality that $f^0$ is the largest one among $f^i$. If each $D_{k-1,2}^i-F^i$ is connected, by Theorem \ref{D-k-n-super-lambda-dash}, then $D_{k,2}-F$ is connected. So we may assume that $D_{k-1,2}^0-F^0$ is disconnected. We consider the following cases.

\begin{enumerate}[{Case }1:]
\item $n+k-2\leq f^0\leq 2n+2k-7$. If $D_{k-1,2}^j-F^j$ is connected for each $j\in\{1,\cdots,p\}$, then $D_{k,2}-V(D_{k-1,2}^0)-F$ is connected. By Theorems \ref{super-lambda} and \ref{D-k-n-super-lambda-dash}, then $F^0$ isolates a singleton $u$ of $D_{k-1,2}^0$. Since there exists no isolated vertex in $D_{k,2}-F$, $u$ must connect to a vertex in $D_{k,2}-V(D_{k-1,2}^0)-F$. Obviously, there exists a vertex in $D_{k-1,2}^0-u-F^0$ connecting to a vertex in $D_{k,2}-V(D_{k-1,2}^0)-F$. Thus, $D_{k,2}-F$ is connected.

Now we assume that for some $j\in\{1,\cdots,p\}$, $D_{k-1,2}^j-F^j$, say $D_{k-1,2}^1-F^1$ is disconnected. At this time, $D_{k,2}-V(D_{k-1,2}^0)\cup V(D_{k-1,2}^1)-F$ is connected. Again, $F^0$ (resp. $F^1$) isolates a singleton $u$ (resp. $v$) of $D_{k-1,2}^0$ (resp. $D_{k-1,2}^1$) since $f^0\geq f^1$ and $\lambda_2(D_{k-1,2})=2n+2k-6$. If $uv\in E(D_{k,2})$, then $F$ isolates an edge in $D_{k,2}$, a contradiction; otherwise, $D_{k,2}-F$ is connected.
\item $2n+2k-6\leq f^0\leq3n+3k-11$. Similarly, if each of $D_{k-1,2}^j-F^j$ is connected for $j\in\{1,\cdots,p\}$, then $D_{k,2}-V(D_{k-1,2}^0)-F$ is connected. By Theorems \ref{super-lambda} and \ref{D-k-n-super-lambda-dash}, it can be shown that $D_{k-1,2}^0-F^0$ consists of at most two singletons $u$ and $v$, or exactly one isolated edge $uv$. If $D_{k-1,2}^0-F^0$ consists of a singleton $u$, then $u$ must connect to a vertex in $D_{k,2}-V(D_{k-1,2}^0)-F$ since there exists no isolated vertex in $D_{k,2}-F$. If $D_{k-1,2}^0-F^0$ consists of exactly one isolated edge $uv$, then $uu^k\not\in F$ or $vv^k\not\in F$, indicating that $D_{k,2}-F$ is connected.

Now we assume that for some $j\in\{1,\cdots,p\}$, $D_{k-1,2}^j-F^j$, say $D_{k-1,2}^1-F^1$ is disconnected. At this time, $D_{k,2}-V(D_{k-1,2}^0)\cup V(D_{k-1,2}^1)-F$ is connected. Since $2(2n+2k-6)=3n+3k-7=8$ when $k=3$, each of $D_{k-1,2}^0-F^0$ and $D_{k-1,2}^1-F^1$ may contain an isolated edge. Obviously, $D_{3,2}-F$ is connected in this case. So we assume that $F^0$ isolates a vertex $x$ or an edge $xy$ of $D_{k-1,2}^0$ and $F^1$ isolates a singleton $w$ of $D_{k-1,2}^1$. It is easy to know that $D_{k,n}-F$ is connected when $F^0$ isolates a vertex $x$ of $D_{k-1,2}^0$. Therefore, we assume that $F^0$ isolates an edge $xy$ of $D_{k-1,2}^0$. Furthermore, if $xw$ or $yw$, say $yw$, is an edge in $D_{k,2}-F$ and $xx^k\in F$, then $F$ isolates a path $xyw$ in $D_{k,2}$. Otherwise, $D_{k,2}-F$ is connected since $|F\cap E_k|\leq 1$.
\item $f^0\geq3n+3k-10$. Note that for some $j\in\{1,\cdots,p\}$, $D_{k-1,2}^j-F^j$ may be disconnected when $k=3$. Obviously, $D_{k,2}-V(D_{k-1,2}^0)-F$ is connected when $k\geq4$. If $D_{k-1,2}^0-F^0$ contains a singleton or an isolated edge, by our assumption, then it must connect to a vertex in $D_{k,2}-V(D_{k-1,2}^0)-F$. So we assume that each component $C$ of $D_{k-1,2}^0-F^0$ has order at least three. If $|V(C)|=3$ and there exists a vertex $u\in V(C)$ such that $uu^k\not\in F$, then $C$ is connected to $D_{k,2}-V(D_{k-1,2}^0)-F$, we are done. Suppose not. Then $C$ is a component of $D_{k,2}-F$. Meanwhile $f^0=3n+3k-10$. This implies that $F$ isolates a path of length two. If $|V(C)|>3$, then $C$ is obviously connected to $D_{k-1,2}^0-F^0$, indicating that $D_{k,2}-F$ is connected.

Now we consider $D_{3,2}$. We may assume that $D_{2,2}^1-F^1$ is disconnected. Obviously, $f^0=5$ and $f^1=3$. So $D_{2,2}^0-F^0$ may contain a path of length two, an isolated edge, or at most two singletons. Since $F\cap E_2=\emptyset$, each component of $D_{2,2}^0-F^0$ with at least two vertices is connected to $D_{3,2}-V(D_{2,2}^0)\cup V(D_{2,2}^1)-F$. Therefore, we may assume that $D_{2,2}^0-F^0$ contains two singletons $x$ and $y$, and $D_{2,2}^1-F^1$ contains a singleton $w$. If $xw\not\in E(D_{3,2})$ and $yw\not\in E(D_{3,2})$, then $x$, $y$ and $w$ is connected to $D_{3,2}-V(D_{2,2}^0)\cup V(D_{2,2}^1)-F$ by our assumption. If $xw\in E(D_{3,2})$ or $yw\in E(D_{3,2})$, then $F$ isolates an isolated edge, a contradiction.
\end{enumerate}
Hence, the theorem holds.
\end{proof}

\section{Matching preclusion and conditional matching preclusion of DCell}
\label{sec:matching}

We begin with some useful statements.

\begin{theorem}\label{PM}\cite{Plesnik} Let $G$ be an $r$-regular graph of even order. If $G$ is $(r-1)$-edge-connected, then $G-F$ has a perfect matching for every $F\subset E(G)$ with $|F|\leq r-1$.
\end{theorem}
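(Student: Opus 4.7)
The plan is to prove the statement via Tutte's 1-Factor Theorem: $G-F$ has a perfect matching iff $o((G-F)-S) \leq |S|$ for every $S \subseteq V(G)$, where $o(\cdot)$ counts odd components. I would assume for contradiction that some $S$ witnesses a failure; since $|V(G)|$ is even, parity boosts this to $o((G-F)-S) \geq |S|+2$. Let $C_1,\dots,C_k$ be the odd components of $(G-F)-S$, with $k\geq |S|+2$, and let $T$ be the (possibly empty) union of the even components. The core of the argument is to double-count, in $G$, the edge-boundaries $\partial C_i := e_G(C_i, V(G)\setminus C_i)$ and derive conflicting upper and lower bounds on $\sum_i |\partial C_i|$.

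For the lower bound, edge-connectivity gives $|\partial C_i|\geq r-1$, but this can be sharpened by a parity observation: since $G$ is $r$-regular and $|C_i|$ is odd, $|\partial C_i| = r|C_i| - 2|E(G[C_i])| \equiv r \pmod 2$, which has the opposite parity to $r-1$, so in fact $|\partial C_i|\geq r$. Summing yields $\sum_i |\partial C_i| \geq kr \geq (|S|+2)r$. For the upper bound, I would partition the boundary contributions by where the edge lands. Edges from $\bigcup_i C_i$ to $S$ contribute at most $\sum_{v\in S} d_G(v) = r|S|$ in total. Edges from some $C_i$ to $T$, or between distinct $C_i$ and $C_j$, must all lie in $F$ (otherwise they would merge components in $(G-F)-S$). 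Accounting for the fact that each $C_i$-to-$C_j$ edge is counted twice in $\sum_i|\partial C_i|$, these $F$-contributions are bounded by $2|F|\leq 2(r-1)$. Hence $\sum_i |\partial C_i|\leq r|S| + 2(r-1)$, which combined with the lower bound forces $(|S|+2)r \leq r|S| + 2(r-1)$, i.e., $2\leq 0$, a contradiction.

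The load-bearing observation is the parity step that lifts the edge-connectivity bound from $r-1$ to $r$ on each $|\partial C_i|$; without it the two bounds would agree exactly and no contradiction would follow, so this is the step I would expect to be the main subtlety. Secondary care is needed in the bookkeeping — correctly handling the factor of $2$ on edges between distinct odd components, and checking the degenerate case $|S|=0$, which reduces directly to $2r\leq 2(r-1)$ using the same parity trick and the fact that all inter-component edges of $G-F$ lie in $F$.
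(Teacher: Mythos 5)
Your argument is correct: the Tutte-theorem counting with the parity lift from $|\partial C_i|\geq r-1$ to $|\partial C_i|\geq r$ (using $|\partial C_i|\equiv r|C_i|\equiv r \pmod 2$ for odd $|C_i|$) is exactly the classical proof of Plesn\'{\i}k's theorem, and your bookkeeping of the boundary edges (at most $r|S|$ into $S$, at most $2|F|$ for the remaining boundary edges, which all lie in $F$) is sound. Note, however, that the paper does not prove this statement at all --- it is imported verbatim with a citation to Plesn\'{\i}k --- so there is no in-paper argument to compare against; your proof simply supplies the standard one.
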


This theorem obviously indicates that $mp(G)=r$ for an $r$-regular $(r-1)$-edge-connected graph $G$. The notation $\alpha(G)$ in theorems below is the independence number of $G$.

\begin{theorem}\label{super-matched}\cite{Cheng} Let $G$ be an $r$-regular graph of even order ($r\geq3$). Suppose $G$ is super-$\lambda$ and $\alpha(G)<\frac{|V|-2}{2}$. Then $G$ is super matched.
\end{theorem}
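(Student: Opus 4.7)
The plan is to first establish $mp(G)=r$ and then to show that every optimal solution is the set of $r$ edges incident to a single vertex. Super-$\lambda$ together with $r$-regularity gives $\lambda(G)=r$, so $G$ is $(r-1)$-edge-connected, and Theorem~\ref{PM} yields $mp(G)\ge r$; combined with the trivial upper bound $mp(G)\le\delta(G)=r$, this settles the value. The substance of the theorem is the uniqueness of the optimal solutions.

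For super-matchedness, I would take any optimal solution $F$ with $|F|=r$ and argue by contradiction that $F$ must be trivial. Since $G-F$ has no perfect matching and $|V|$ is even, Tutte--Berge supplies a set $S\subseteq V(G)$ with $o(G-F-S)\ge|S|+2$ (the gap of at least two is forced by parity). If $S=\emptyset$, then $F$ is a minimum edge-cut of $G$, so super-$\lambda$ at once forces $F$ to isolate a single vertex, contradicting non-triviality. The interesting case is $s:=|S|\ge 1$, where I would denote the odd components of $G-F-S$ by $C_1,\dots,C_q$ with $q\ge s+2$ and set $e_i=|[C_i,V\setminus C_i]|$. A two-sided edge count then pins everything down: $r$-edge-connectivity gives $\sum_i e_i\ge qr\ge(s+2)r$, while tallying how each edge in $\sum e_i$ lands shows $\sum_i e_i\le sr+2|F|=sr+2r$ (the $sr$ absorbing edges from $C_i$ to $S$, the $2r$ absorbing the edges of $F$ leaving the $C_i$'s, counted with multiplicity). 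Equality everywhere then forces $q=s+2$, every $e_i=r$, all edges from $S$ landing in the $C_i$'s, and all edges of $F$ joining two of the $C_i$'s.

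The hard part will be converting these tight equalities into rigid structure. Because each cut $[C_i,V\setminus C_i]$ has exactly $r=\lambda(G)$ edges, super-$\lambda$ forces one side to be a singleton; since $V\setminus C_i$ contains $S$ and the $q-1\ge s+1\ge 2$ other $C_j$'s, it is too large, so every $C_i$ must itself be a singleton. A hypothetical edge in $G$ from a singleton to an even component of $G-F-S$ would have to lie in $F$, but $F$ is supported entirely among the singletons; hence no such edges exist. A parallel degree count at $S$ (its $sr$ edges being exhausted by the singletons) shows that $S$ is independent in $G$ and also has no edges to any even component. If an even component exists, these restrictions disconnect $G$, contradicting $r$-edge-connectivity. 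Otherwise $|V|=2s+2$ and the independent set $S$ has size $s=(|V|-2)/2$, contradicting $\alpha(G)<(|V|-2)/2$. Either outcome is impossible, so every optimal solution must be trivial and $G$ is super matched.
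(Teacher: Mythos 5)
The paper does not prove this statement---it is imported verbatim from \cite{Cheng} as a tool, so there is no in-paper proof to compare against. Your argument is correct and is essentially the standard one from that literature: Plesnik's theorem (Theorem~\ref{PM}) pins down $mp(G)=r$, and the uniqueness of optimal solutions follows from the Berge--Tutte deficiency set $S$ together with the edge count $(|S|+2)r\leq\sum_i|[C_i,\overline{C_i}]|\leq |S|r+2|F|$, whose forced equalities (each cut of size exactly $\lambda(G)=r$, hence each odd component a singleton by super-$\lambda$; $S$ independent; no even components) contradict $\alpha(G)<\frac{|V|-2}{2}$.
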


\begin{theorem}\label{mp1}\cite{Cheng2} Let $G$ be an $r$-regular graph of even order ($r\geq4$). Suppose $G$ contains a 3-cycle, $G$ is $r$-edge-connected and $G$ is super $(3r-8)$-edge-connected of order 2. Moreover, assume that $\alpha(G)<\frac{|V|-2}{2}-(2r-8)$. Then mp$_1(G)=2r-3$.
\end{theorem}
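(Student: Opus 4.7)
The plan is to establish $mp_1(G) \le 2r-3$ and $mp_1(G) \ge 2r-3$ separately.

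The upper bound is immediate from the triangle hypothesis: if $uvw$ is a triangle, then the 2-path $u \to v \to w$ satisfies $y_G(u,w)=1$, so
\[
v_e(G) \le d_G(u)+d_G(w)-2-1 = 2r-3,
\]
and the Proposition of \cite{Cheng0} yields $mp_1(G) \le v_e(G) \le 2r-3$ at once.

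For the lower bound I would fix any $F \subseteq E(G)$ with $|F|\le 2r-4$ such that $G-F$ has no isolated vertex, and show that $G-F$ admits a perfect matching. Assume not; by the Tutte--Berge formula there is $S \subseteq V(G)$ with $o(G-F-S) \ge |S|+2$ (parity matches the even $|V|$). Let $C_1,\dots,C_t$ be the odd components and set $E_i = E_G(C_i, V \setminus C_i)$. The $r$-edge-connectivity gives $|E_i| \ge r$; moreover every edge of $E_i$ either lies in $F$ (contributing to at most two of the $E_j$'s) or has its other endpoint in $S$, so $\sum_i |E_i| \le 2|F|+r|S|$. Combined with $t \ge |S|+2$ and $|F| \le 2r-4$, this forces $t=|S|+2$ by parity and yields $\sum_i (|E_i|-r) \le 2r-8$, so in particular each $|E_i| \le 3r-8$. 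The super $(3r-8)$-edge-connected of order $2$ hypothesis applied to the cut $E_i$ then forces $|C_i|\le 2$ or $|V\setminus C_i|\le 2$; the latter is excluded whenever $|S| \ge 1$ because the remaining $t-1$ odd components plus $S$ already contribute at least $2|S|+1 \ge 3$ vertices, so every $C_i$ is a singleton. When $|S|=0$ the singleton $C_i$ is an isolated vertex of $G-F$, contradicting our standing hypothesis.

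So assume $|S| \ge 1$ and write $I=\{v_1,\dots,v_{|S|+2}\}$ for the singletons; every neighbour of each $v_i$ lies in $S$ or is joined to $v_i$ by an $F$-edge. Letting $a = |F \cap E_G(I)|$ and summing $F$-incidences at $I$ gives $2a + b + c \ge (|S|+2)r - |E_G(I,S)|$, where $b$ and $c$ denote the $F$-edges from $I$ to the even components and to $S$ respectively; meanwhile $a+b+c \le |F| \le 2r-4$. The main obstacle is closing the contradiction uniformly in $|S|$. Subtracting these inequalities (and bounding $|E_G(I,S)|$ by $|S|(|S|+2)$) gives $a \ge (|S|+2)(r-|S|) - (2r-4)$, which for small $|S|$ (notably $|S|=1$, where $a \ge r+1 > 3 = \binom{3}{2}$) already exceeds the trivial cap $\binom{|S|+2}{2}$ on edges inside $I$, an impossibility; a few residual values of $r$ in the $|S|=2$ range must be handled by inspecting the induced subgraph on $I \cup S$ directly and verifying that no matching can saturate $I$ against $S$ while satisfying the super-edge-connectivity constraint. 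For larger $|S|$, removing one endpoint of each intra-$I$ $F$-edge leaves an independent set of $G$ of size at least $|S|+2-a \ge |S|-2r+6$, which combined with the structural identity $|V| = 2|S|+2+|\text{even}|$ eventually contradicts the hypothesis $\alpha(G) < (|V|-2)/2 - (2r-8)$, completing the proof.
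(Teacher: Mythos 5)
First, a point of reference: the paper does not prove this statement at all — Theorem \ref{mp1} is imported verbatim from \cite{Cheng2} — so there is no in-paper argument to compare yours against. On its own terms, your upper bound ($mp_1(G)\le v_e(G)\le 2r-3$ via a triangle) is fine, and the first half of your lower bound is sound: the Berge--Tutte set $S$, the count $\sum_i|E_i|\le 2|F|+r|S|$ forcing $t=|S|+2$ and $|E_i|\le 3r-8$, and the deduction via the super-edge-connectivity hypothesis that every odd component of $G-F-S$ is a singleton are all correct, as is the $|S|\in\{0,1\}$ analysis.

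The genuine gap is in your concluding step for larger $|S|$. You claim that an independent set of size at least $|S|+2-a\ge|S|-2r+6$ ``eventually contradicts'' $\alpha(G)<\frac{|V|-2}{2}-(2r-8)$. It does not: since $|V|=2|S|+2+|\mathrm{even}|$, one has $|S|\le\frac{|V|-2}{2}$, so the independent set you produce has size at most $\frac{|V|-2}{2}-2r+6$, which falls short of the threshold $\frac{|V|-2}{2}-2r+8$ by at least $2$ (and by more when even components are present). The slack is real, not an artifact of loose bookkeeping: when the $a\le 2r-4$ intra-$I$ edges form a matching inside $I$, the bound $|I|-a$ genuinely undercounts $\alpha(G[I])=|I|-\tau(G[I])$, and closing the argument requires either a sharper handle on the vertex cover number of $G[I]$ or an independent set drawn partly from $S$ (in the extremal $r=4$ configuration, $S$ itself turns out to be independent). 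Relatedly, your disposal of the residual $|S|=2$, $r\in\{4,5\}$ cases points at the wrong hypothesis: the tight configurations there are forced to be $K_{2,2,2}$ and $K_6$, both of which satisfy regularity, the triangle condition, $r$-edge-connectivity and super $(3r-8)$-edge-connectivity of order $2$, and both of which admit a conditional preclusion set of size $2r-4$; they are excluded only by the independence-number hypothesis ($\alpha=2\not<2$, resp.\ $\alpha=1\not<0$), not by any failure of ``a matching saturating $I$ against $S$.'' So the skeleton of the argument is right, but the final contradiction is not established as written.
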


\begin{theorem}\label{conditional-super-mp1}\cite{Cheng2} Let $G$ be an $r$-regular graph of even order ($r\geq4$). Suppose $G$ contains a 3-cycle, mp$_1(G)=2r-3$, $|V(G)|\geq8$, $G$ is super-$\lambda$ and $G$ is super $(3r-6)$-edge-connected of order 3. Moreover, assume that $\alpha(G)<\min\{\frac{|V(G)|-4}{2},\frac{|V(G)|-2}{2}-(2r-6)\}$. Then $G$ is conditionally super matched.
\end{theorem}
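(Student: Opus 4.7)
The plan is to prove both halves of ``conditionally super matched'': $mp_1(G) = v_e(G)$ and every optimal conditional matching preclusion set is trivial. The first half is immediate from the hypothesis: since $G$ is $r$-regular and contains a triangle $uvw$, the 2-path $u$-$v$-$w$ gives $v_e(G) \leq d_G(u)+d_G(w)-2-y_G(u,w) = 2r-3$, which combined with $mp_1(G) = 2r-3$ and the earlier proposition $mp_1\leq v_e$ forces $mp_1(G) = v_e(G) = 2r-3$. The bulk of the work is to show any optimal conditional matching preclusion set $F$ with $|F|=2r-3$ is trivial, i.e.\ is exactly the set of edges incident to two vertices $u_1,u_2$ except the edges $u_1v$ and $u_2v$, for some triangle $u_1vu_2$.

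The first step is to show $G-F$ is connected. The super $(3r-6)$-edge-connected-of-order-$3$ hypothesis, together with $|F|=2r-3\leq 3r-6$ (valid for $r\geq 3$), forces any disconnection of $G-F$ to isolate a subgraph of at most $3$ vertices. Since there are no isolated vertices in $G-F$, the candidates are a $2$-vertex edge, a $3$-vertex path, or a triangle, with $G$-boundaries $2r-2$, $3r-4$, and $3r-6$ respectively --- each strictly larger than $|F|=2r-3$ for $r\geq 4$. Hence $G-F$ is connected, and since it has even order and no perfect matching, Tutte's theorem supplies a nonempty $S\subseteq V(G)$ with $o(G-F-S)\geq |S|+2$ odd components.

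The core of the argument is case analysis on $|S|$. For $|S|=1$, say $S=\{v\}$: each singleton odd component $\{u\}$ of $G-F-v$ has its unique $(G-F)$-neighbor equal to $v$, contributing $r-1$ edges of $F$ incident to $u$. Counting with double-counting for edges among singletons gives $|F|\geq s(r-1)-{s\choose 2}$, forcing $s\leq 2$ for $r\geq 4$, where $s$ is the number of singleton odd components. The sub-cases $s=0$ and $s=1$ are ruled out by combining the global boundary inequality $\sum_i(r|C_i|-2e(C_i))\leq r|S|+2|F| = 5r-6$ with lower bounds on non-singleton components' boundary (using that each has $|C_i|\geq 3$). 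For $s=2$ with singletons $u_1, u_2$, a careful incidence budget forces $u_1u_2\in E(G)\cap F$ and every $F$-edge to be incident to $\{u_1, u_2\}$, pinning $F$ to exactly the trivial form for the triangle $u_1vu_2$.

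The main obstacle is ruling out $|S|\geq 2$. Here each singleton odd component contributes at least $r-|S|$ edges of $F$ incident to its vertex, and the set of singleton vertices ``almost'' forms an independent set in $G$ up to the $F$-edges among them. A Tur\'{a}n-type extraction produces a large genuine independent set in $G$, and the hypothesis $\alpha(G) < \min\{(|V|-4)/2,\,(|V|-2)/2-(2r-6)\}$ bounds its size. Combining this with $o\geq |S|+2$ and the global boundary inequality, every sub-case collapses to a contradiction, typically leveraging $|V(G)|\geq 8$ and super-$\lambda$ to show either too many forbidden independent vertices arise or the edge budget is exceeded. Once $|S|\geq 2$ is excluded, the $|S|=1, s=2$ analysis completes the proof.
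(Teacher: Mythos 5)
This statement is not proved in the paper at all: it is imported verbatim from \cite{Cheng2} and used as a black box in the proof of Theorem \ref{mp1-d-k-n}, so there is no in-paper proof to compare your attempt against. Judging your proposal on its own merits: the skeleton is the right one (it is essentially the Cheng--Lipt\'{a}k strategy), and the easy parts are correct --- $mp_1(G)=v_e(G)=2r-3$ follows from the triangle hypothesis together with Proposition 1.1, the connectivity of $G-F$ follows from the super $(3r-6)$-edge-connected-of-order-3 hypothesis exactly as you say, and your $|S|=1$, $s=2$ endgame correctly pins $F$ to the trivial solution (the budget $2(r-1)-1=2r-3$ forces $u_1u_2\in E(G)\cap F$).

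There are, however, two genuine gaps. First, your stated lower bound on the boundary of a non-singleton component ``using that each has $|C_i|\geq 3$'' is false as written: a connected induced subgraph on $m$ vertices has boundary $mr-2e(C_i)$, which for large $m$ (the ``big'' component) can be far below $3r-6$ --- e.g.\ a $K_4$ in a $4$-regular graph has boundary $4$. To get $b_i\geq 3r-6$ for every non-singleton component you must invoke the super $(3r-6)$-edge-connectivity of order $3$ again: if $b_i\leq 3r-6$ then one side of the cut $[C_i,\overline{C_i}]$ has at most $3$ vertices, and since $\overline{C_i}\supseteq S\cup(\text{other components})$ has more than $3$ vertices, it must be $C_i$ itself, forcing $|C_i|=3$ and $b_i\geq 3r-6$ anyway. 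Second, and more seriously, the case $|S|\geq 2$ --- which is where the hypotheses $|V(G)|\geq 8$ and the specific bound $\alpha(G)<\min\{\frac{|V(G)|-4}{2},\frac{|V(G)|-2}{2}-(2r-6)\}$ must actually be consumed --- is only gestured at. The chain you describe (number of singletons $a\geq 2|S|+3-|V|/2$ from the component-size count, then $\alpha(G)\geq a-|F|$ by Tur\'{a}n-type extraction) does not contradict the stated $\alpha$-bound for moderate $|S|$: it only bites when $|S|$ is close to $|V|/2$, so an additional idea is needed to dispose of intermediate values of $|S|$. As it stands the hardest part of the theorem is asserted rather than proved.
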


\begin{theorem}\label{mp1-triangle-free}\cite{Cheng} Let $G$ be an $r$-regular graph of even order ($r\geq3$). Suppose that $G$ is triangle-free, $G$ is $r$-edge-connected and $G$ is super $(3k-6)$-edge-connected of order 2. Moreover, assume that $\alpha(G)<\frac{|V|-2}{2}-(2r-6)$. Then mp$_1(G)=2r-2$.
\end{theorem}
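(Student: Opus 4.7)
The strategy is to obtain the upper bound from the cited proposition and derive the lower bound by Tutte's theorem combined with the super-edge-connectivity and independence-number hypotheses.

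The upper bound is almost immediate from triangle-freeness: for any $2$-path $u\to v\to w$ in $G$ the endpoints $u,w$ are non-adjacent (else $uvw$ would be a triangle), so $y_G(u,w)=0$ and $v_e(G)=d_G(u)+d_G(w)-2=2r-2$. The cited proposition then yields mp$_1(G)\leq v_e(G)=2r-2$, so the real work is to prove mp$_1(G)\geq 2r-2$.

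For the lower bound I would argue by contradiction. Take $F\subseteq E(G)$ with $|F|\leq 2r-3$ such that $G-F$ has no isolated vertex but no perfect matching. By Tutte's theorem there is $S\subseteq V(G)$ with $o(G-F-S)\geq |S|+2$; let $D_1,\ldots,D_t$ denote the odd components of $G-F-S$, so $t\geq |S|+2$. When $S=\emptyset$, the super $(3r-6)$-edge-connected-of-order-$2$ assumption (applicable since $|F|\leq 2r-3\leq 3r-6$) forces $G-F$ to be connected or to have small components totaling at most $2$ vertices; the no-isolated-vertex hypothesis rules out a singleton small component, while an isolated $K_2$ is an even component. Hence $o(G-F)\leq 1$, contradicting $o(G-F)\geq 2$.

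For $|S|\geq 1$, the standard degree-sum identity
\[
\sum_{i=1}^{t}\bigl(r\,|V(D_i)|-2|E(D_i)|\bigr)=\sum_{i=1}^{t}|E_G(V(D_i),S)|+\sum_{i=1}^{t}|F\cap\partial_G(V(D_i))|
\]
combined with $t\geq |S|+2$ and $|F|\leq 2r-3$ forces one of two extreme configurations: either many singleton $D_i$'s whose neighborhoods lie entirely in $S$, producing an independent set of size $\geq t$ that is precisely excluded by the hypothesis $\alpha(G)<(|V(G)|-2)/2-(2r-6)$; or some $D_i$ with $|V(D_i)|\geq 3$, in which case triangle-freeness makes a size-$3$ component a $P_3$ whose boundary in $G$ has $3r-4>2r-3$ edges that must lie in $F\cup E_G(V(D_i),S)$, and iterating a similar estimate for larger odd $D_i$ via the super-edge-connectivity budget yields $|F|>2r-3$, a contradiction. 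The main obstacle is the $|S|\geq 1$ case: one must simultaneously juggle the small-$D_i$ and large-$D_i$ alternatives, and the independence-number hypothesis is tailored exactly to seal the ``many singletons attached only to $S$'' configuration that would otherwise evade the connectivity-based edge counts.
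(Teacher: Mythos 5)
First, note that the paper does not prove this statement at all: Theorem~\ref{mp1-triangle-free} is imported verbatim from \cite{Cheng} and used as a black box in the proof of Theorem~\ref{mp1-d-k-2}, so there is no in-paper argument to compare yours against; I can only assess your attempt on its own terms. Your upper bound is correct (triangle-freeness gives $y_G(u,w)=0$ for every $2$-path, so $v_e(G)=2r-2$, and Proposition~1 applies since $r\geq3$), and your $S=\emptyset$ case is sound: $|F|\leq 2r-3\leq 3r-6$ for $r\geq3$, and the order-$2$ condition together with ``no isolated vertex'' and the parity of $|V(G)|$ forces $o(G-F)=0$.

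The $|S|\geq1$ case, however, has genuine gaps rather than merely suppressed routine details. (a) The singleton odd components of $G-F-S$ do \emph{not} ``produce an independent set of size $\geq t$'': two such singletons can be adjacent in $G$ via an edge of $F$, so at best you get an independent set of size about $t-|F|$, and this loss of $2r-3$ matters at exactly the scale of the hypothesis on $\alpha(G)$. (b) Even granting an independent set of size close to $t$, you never show $t$ is large enough to contradict $\alpha(G)<\frac{|V|-2}{2}-(2r-6)$. The only lower bound you have is $t\geq|S|+2$, and since $|V|\geq|S|+t$ this is compatible with $t$ being far below $\frac{|V|-2}{2}-(2r-6)+(2r-3)$; to close this you must additionally control the total size of the even components and of $S$, which your sketch does not attempt. (c) The elimination of odd components with $|V(D_i)|\geq3$ is asserted (``iterating a similar estimate \ldots yields $|F|>2r-3$'') but not performed; this is precisely where the super $(3r-6)$-edge-connectivity of order $2$ must be invoked a second time, to force $|\partial_G(V(D_i))|\geq 3r-5$ for any connected $X=V(D_i)$ with $3\leq|X|\leq|V|-3$, which when fed into your degree-sum identity (whose right-hand side is bounded by $r|S|+2|F|$) is what actually kills the large components. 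As written, the lower-bound half of the proof is a plan, not a proof, and step (a) is false as stated. (As a side remark, the ``$3k-6$'' in the statement is a typo for ``$3r-6$''.)
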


\begin{theorem}\label{conditional-super-mp1-triangle-free}\cite{Cheng} Let $G$ be an $r$-regular graph of even order ($r\geq3$). Suppose that $G$ is triangle-free, mp$_1(G)=2r-2$, $G$ is super-$\lambda$ and $G$ is super $(3r-4)$-edge-connected of order 3. Moreover, assume that $\alpha(G)<\frac{|V|-2}{2}-(2r-4)$. Then $G$ is conditionally super matched.
\end{theorem}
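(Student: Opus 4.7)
The plan is to proceed by contradiction. Let $F$ be a non-trivial optimal conditional matching preclusion set, so $|F| = mp_1(G) = 2r-2$ and $G - F$ has neither a perfect matching nor an isolated vertex. Since $G - F$ admits no perfect matching, the Tutte--Berge formula supplies a set $S \subseteq V(G)$ with $o(G - F - S) \geq |S| + 2$, where $o(\cdot)$ counts odd components and both sides share the parity of $|V(G)|$. Write $m = o(G - F - S)$, let $a$ be the number of singleton odd components and $b = m - a$ the number of odd components of order at least three. Because $G$ is $r$-regular and triangle-free, $\xi_2(G) = 2r - 2$, and the $G$-boundary of any induced $P_3$ has exactly $3r - 4$ edges.

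First I would carry out two preparatory estimates. Summing the $G$-boundaries of the components of $G - F - S$ while charging each $F$-edge at most twice and each edge leaving $S$ at most once yields
\[
ra + (r+1)b + (\text{contributions of even components}) \leq 2|F| + r|S|,
\]
where the $r+1$ lower bound on non-singleton component boundaries uses super-$\lambda$ of $G$. Combined with $m \geq |S| + 2$ and $|F| = 2r - 2$, this restricts the admissible triples $(|S|, a, b)$ to only a few configurations. Separately, the singleton components form an independent set in $G$, so the hypothesis $\alpha(G) < (|V| - 2)/2 - (2r - 4)$ rules out configurations in which $a$ is large.

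The core step is the case analysis on $(|S|, a, b)$. The key configuration is $|S| = 1$, $a = 2$, $b = 1$: writing $S = \{v\}$ with singletons $u, w$ and a large odd component $C$, the no-isolated-vertex hypothesis forces $uv, wv \notin F$, so $u \to v \to w$ is a path of length two in $G$; triangle-freeness then gives $uw \notin E(G)$, and the $r - 1$ other edges at each of $u$ and $w$ must lie in $F$ (otherwise the corresponding endpoint would not be a singleton of $G - F - S$), accounting for exactly $2(r-1) = 2r - 2 = |F|$ edges. This pins down $F$ as precisely the trivial solution induced by the 2-path $u \to v \to w$, contradicting non-triviality. The remaining configurations are eliminated in turn: components of order exactly three are forced to be $P_3$'s with $G$-boundary $3r - 4 > |F|$, impossible when $|S| = 0$; small-side components of order at least four are excluded by the super $(3r-4)$-edge-connected of order $3$ hypothesis, which caps their $G$-boundary at $3r - 5$ and then collides with super-$\lambda$; and large $|S|$ is precluded by the $\alpha(G)$ bound together with the refined edge count above.

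The main obstacle is the subcase $|S| = 1$, $a = 1$, $b = 2$: the single singleton already contributes $r - 1$ edges to $F$, leaving only $r - 1$ further edges of $F$ to separate the two odd components of order at least three from each other and from any even components while respecting component-boundary lower bounds of $r + 1$ apiece. One must combine super-$\lambda$, the super $(3r-4)$-edge-connected of order $3$ hypothesis, and triangle-freeness to derive a numerical contradiction on the component boundaries, likely by analyzing the edges from the Tutte vertex $v$ to the two larger odd components and comparing with the very tight $F$-budget of $r - 1$. Once this obstacle is cleared, the remaining cases follow routinely from the preparatory estimates and the $\alpha(G)$ bound.
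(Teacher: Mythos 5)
First, a point of comparison: the paper does not prove this statement at all --- it is imported verbatim from \cite{Cheng} and used as a black box, so there is no in-paper proof to measure your attempt against. Judged on its own, your outline has the right skeleton for this family of results: a Berge--Tutte witness $S$ with $o(G-F-S)\ge|S|+2$, boundary counting with $\partial_G(C)\ge r$ for singleton components and $\ge r+1$ for larger ones via super-$\lambda$, the independence-number hypothesis, and --- correctly and importantly --- the identification that the configuration $|S|=1$ with two singletons $u,w$ forces each of $u,w$ to retain exactly the edge to $v$, so that triangle-freeness gives $uw\notin E(G)$ and the count $2(r-1)=2r-2=|F|$ pins $F$ down as the trivial solution for the path $u\to v\to w$. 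That subcase is genuinely done.

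However, there are real gaps. (i) Your assertion that the singleton components of $G-F-S$ form an independent set in $G$ is false in general: two singletons may be joined by an edge of $F$. They are independent only in $G-F$ (yielding merely $\alpha(G)\ge a-|F|$), or in $G$ when $|S|=1$, where triangle-freeness applies because all singletons are adjacent to the unique vertex of $S$. (ii) The subcase $|S|=1$, $a=1$, $b=2$, which you flag as the main obstacle, is left open. It does close, but not by your budget count alone: when a large odd component $C$ has $|C|\ge4$ one needs the super $(3r-4)$-edge-connectivity of order $3$ in the form ``every $X$ with $|X|\ge4$ and $|\overline{X}|\ge4$ satisfies $|[X,\overline{X}]|\ge 3r-3$'' (your phrasing that the hypothesis \emph{caps} the boundary at $3r-5$ has the inequality backwards); then $\partial(C_1)+\partial(C_2)\ge (3r-4)+(3r-4)$ exceeds the at most $(r-1)+(r-1)+2(r-1)=4r-4$ edge-slots available from $v$ and from $F$. (iii) Most seriously, the cases $|S|\ge2$ are not ``routine'': the inequality $ra+(r+1)b\le r|S|+2|F|$ together with the $\alpha(G)$ bound does not eliminate, for example, $r=3$, $|S|=2$, $a=4$, $b=0$, realized by four degree-$3$ singletons forming a $4$-cycle of $F$-edges and attached alternately to $v_1$ and $v_2$; this passes all of your stated estimates (indeed $12\le14$), and killing it requires applying super-$\lambda$ to the composite set $X=S\cup\{\text{singletons}\}$, whose $G$-boundary would otherwise have fewer than $r$ edges, forcing $\overline{X}=\emptyset$ and hence a graph too small to satisfy the $\alpha$ hypothesis. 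As it stands the proposal is a plausible plan with one subcase completed, not a proof.
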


We give some upper bounds of $\alpha(D_{k,n})$ as follows.

\begin{lemma}\label{independent-D-k-n}\
\begin{enumerate}
\item $\alpha(D_{2,2})=19$;
\item $\alpha(D_{k,2})\leq\frac{19}{42}|V(D_{k,2})|$ for $k\geq3$;
\item $\alpha(D_{k,n})\leq\frac{1}{n}|V(D_{k,n})|$ for $n\geq3$.
\end{enumerate}
\end{lemma}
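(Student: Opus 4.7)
My plan is to prove the three statements in reverse order: (3) and (2) are both simple inductions on $k$ that exploit the partition of $D_{k,n}$ into $t_{k-1,n}+1$ copies of $D_{k-1,n}$, while (1) is the combinatorial core and will serve as the base case for (2).

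For part (3), I induct on $k$. The base $k=0$ is $D_{0,n}=K_n$, which has $\alpha(K_n)=1=t_{0,n}/n$. For the step, any independent set $I\subseteq V(D_{k,n})$ satisfies $|I\cap V(D_{k-1,n}^i)|\le\alpha(D_{k-1,n})\le t_{k-1,n}/n$ for each $i$, and summing over the $t_{k-1,n}+1$ copies gives $\alpha(D_{k,n})\le(t_{k-1,n}+1)t_{k-1,n}/n=t_{k,n}/n$. Part (2) is the identical induction specialized to $n=2$ with base case $k=2$ supplied by (1): the ratio $\alpha(D_{2,2})/t_{2,2}=19/42$ is preserved because $|V(D_{k,2})|=(t_{k-1,2}+1)t_{k-1,2}$.

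All the combinatorial work lies in part (1). Label the seven copies of $D_{1,2}$ inside $D_{2,2}$ by $C_0,\ldots,C_6$; each $C_i$ is a 6-cycle, so any independent set $I$ satisfies $|I\cap C_i|\le 3$, with equality attained only when $I\cap C_i$ is one of the two alternating triples, which I denote $A$ and $B$. Using Definition~\ref{def}, the 21 level-2 edges form a perfect matching on the 42 vertices of $D_{2,2}$: for $a<b$, the level-2 edge between $C_a$ and $C_b$ joins the vertex with $uid_1=b-1$ in $C_a$ to the vertex with $uid_1=a$ in $C_b$. This explicit pair-to-edge formula reduces the question ``does a level-2 edge lie inside $I$?'' to a $2\times 2$ table lookup on the choices $(x_a,x_b)\in\{A,B\}^2$ whenever both $C_a$ and $C_b$ contribute 3 vertices.

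For the upper bound $\alpha(D_{2,2})\le 19$, I will rule out $|I|\ge 20$ by splitting into two cases. First, if $|I\cap C_i|=3$ for every $i$, then the choice vector $x\in\{A,B\}^7$ is well-defined; fixing $x_0\in\{A,B\}$ and propagating the forced choices through the pair constraints shows that every such $x$ leaves at least three bad pairs inside $I$, and since level-2 edges form a matching this forces at least three vertex deletions and so $|I|\le 18$. Second, if exactly one copy $C_j$ has $|I\cap C_j|<3$, I verify for each $j\in\{0,\ldots,6\}$ that no assignment $(x_i)_{i\ne j}\in\{A,B\}^6$ is free of bad pairs among the six remaining copies. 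For the matching lower bound I exhibit an explicit 19-vertex independent set (take $A$ in $C_0$, $C_3$, $C_6$, take $B$ in $C_1$, $C_5$, and pick a 2-element independent subset of $C_2$ and of $C_4$ avoiding all level-2 conflicts, e.g., $uid_1\in\{0,5\}$ in $C_2$ and $uid_1\in\{1,2\}$ in $C_4$) and verify independence against the 21 level-2 edges directly. The main obstacle is the upper-bound casework, which is elementary but requires careful bookkeeping of forced implications and a separate treatment of each of the seven possible ``short'' copies $j$.
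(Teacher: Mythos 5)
Your parts (2) and (3) are correct and essentially coincide with the paper's argument: the paper proves (3) by partitioning $V(D_{k,n})$ into disjoint copies of $K_n$ (at most one independent vertex per clique), which is the closed form of your induction, and proves (2) exactly as you do, by splitting $D_{k,2}$ into disjoint copies of $D_{2,2}$ with $|V(D_{2,2})|=42$. The genuine divergence is in part (1): the paper simply reports $\alpha(D_{2,2})=19$ as the output of a Magma computation, whereas you propose a human-checkable combinatorial proof via the seven $6$-cycles, the two alternating triples $A=\{0,3,4\}$, $B=\{1,2,5\}$ (in $uid_1$ coordinates along the cycle $0\text{--}1\text{--}4\text{--}5\text{--}3\text{--}2\text{--}0$), and the perfect matching formed by the $21$ level-$2$ edges. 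That route is sound: your case split (all seven copies full, versus exactly one copy contributing two vertices) does exhaust $|I|\geq 20$, your claim that for every choice of six copies no assignment in $\{A,B\}^6$ avoids all bad pairs checks out for each of the seven omitted indices (a short unit-propagation argument settles each), and your exhibited $19$-vertex set is indeed independent. What you buy is a verifiable proof in place of a computation; what it costs is roughly $7\times 2^6$ cases of bookkeeping that the proposal defers rather than carries out.

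One intermediate claim in your Case 1 is false as stated: it is not true that every full assignment $x\in\{A,B\}^7$ produces at least three bad level-$2$ edges. For example, $x=(A,B,B,A,B,B,A)$ on $(C_0,\dots,C_6)$ has exactly two bad pairs, namely the level-$2$ edge between $C_1$ and $C_2$ (joining $uid_1=1$ in $C_1$ to $uid_1=1$ in $C_2$, both in $B$) and the one between $C_4$ and $C_6$ (joining $uid_1=5$ in $C_4$ to $uid_1=4$ in $C_6$); all other $19$ pairs are good. Consequently the conclusion ``$|I|\leq 18$'' you draw in that case is wrong --- deleting one endpoint of each of those two matched edges even yields another $19$-vertex independent set. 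This does not sink the proof, because to rule out $|I|=21$ you only need \emph{at least one} bad pair in every full assignment, which is true (it already follows from your six-copy claim); but you should restate Case 1 with the correct quantifier, and you must actually perform the finite verification rather than assert it, since that verification is the entire content separating your argument from the paper's appeal to Magma.
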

\begin{proof}
\begin{enumerate}
\item We obtain this result directly by using Magma \cite{Bosma};
\item Observe that $D_{k,2}$ is recursively constructed from $D_{2,2}$, we can split $D_{k,2}$ into $D_{2,2}$s. Since $|V(D_{2,2})|=42$, the result follows easily;
\item For $n\geq3$, each vertex is contained in exactly one complete subgraph $K_n$ of $D_{k,n}$. It is obvious that $\alpha(K_n)=1$, then $\alpha(D_{k,n})\leq\frac{1}{n}|V(D_{k,n})|$.
\end{enumerate}
\end{proof}

To determine the matching preclusion number of $D_{1,n}$ ($n\geq2$), we firstly present the definition of the $(n',k')$-star graph. Star graphs are one of the most popular interconnection networks introduced by \cite{Chiang}. The $(n',k')$-star graph with $1\leq k'<n'$, which is a variant of the star graphs, is governed by the two parameters $n'$ and $k'$. The vertex set of $S_{n',k'}$ consists of all $k'$-permutations generated from the set $\{1,2,\cdots,n'\}$. Two vertices $a_1a_2\cdots a_{k'}$ and $b_1b_2\cdots b_{k'}$ are adjacent if one of the following holds:

\begin{enumerate}
\item There exists some $r\in\{2,\cdots,k'\}$ such that $a_1=b_r$, $a_r=b_1$ and $a_i=b_i$ for all $i\in\{1,2,\cdots,k'\}\setminus\{1,r\}$;
\item $a_1\neq b_1$ and $a_i=b_i$ for all $i\in\{2,\cdots,k'\}$.
\end{enumerate}

Clearly, $S_{n',1}$ is the complete graph $K_{n'}$. $S_{n',2}$ is constructed from $n'$ copies of $S_{n'-1,1}$, namely $K_{n'-1}$. Each vertex in a $K_{n'-1}$ has a unique neighbor outside this $K_{n'-1}$. By above, it seems like $S_{n+1,k+1}$ and $D_{k,n}$ have the same base structure and similarly recursive construction rule, one may ask whether $S_{n+1,k+1}$ is isomorphic to $D_{k,n}$. For $k=1$, we give the positive answer below.

\begin{lemma}\label{isomorphism} $D_{1,n}$ is isomorphic to $S_{n+1,2}$.
\end{lemma}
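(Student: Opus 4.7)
\medskip
\noindent\textbf{Proof plan.} The plan is to build an explicit bijection $\phi:V(D_{1,n})\to V(S_{n+1,2})$ and check that it carries edges to edges. First I would record the macroscopic match between the two graphs: both have $n(n+1)$ vertices and are $n$-regular, and both decompose into $n+1$ copies of $K_n$ with exactly one ``cross'' edge between each pair of copies. For $D_{1,n}$ the copies are $D_{0,n}^0,\ldots,D_{0,n}^n$, and by Definition~\ref{def} the unique level-$1$ edge joining copies $a$ and $b$ (with $a<b$) runs from vertex $(a,b-1)$ to vertex $(b,a)$. For $S_{n+1,2}$, grouping vertices $a_1a_2$ by the second coordinate $a_2$ turns condition~(2) into a $K_n$ on each group, while condition~(1) contributes exactly one ``swap'' edge $a_1a_2\leftrightarrow a_2a_1$ between the groups indexed by $a_1$ and $a_2$.

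Second, I would line up the two group structures by the following explicit map. Label both vertex sets over $\{0,1,\ldots,n\}$ and set
\[
\phi(i,a_0)\;=\;\bigl(f_i(a_0),\;i\bigr),\qquad f_i(a_0)\;=\;\begin{cases}a_0 & \text{if } a_0<i,\\ a_0+1 & \text{if } a_0\geq i.\end{cases}
\]
For each fixed $i$, $f_i$ is the order-preserving bijection from $\{0,\ldots,n-1\}$ onto $\{0,\ldots,n\}\setminus\{i\}$, so $\phi$ maps $V(D_{0,n}^i)$ bijectively onto the group of vertices of $S_{n+1,2}$ whose second coordinate equals $i$; assembling over $i$ yields a global bijection.

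Third, I would verify edge preservation in both cases. For an intra-copy edge $(i,a_0)(i,a_0')$ with $a_0\neq a_0'$, the images share the second coordinate $i$ but differ in the first, hence are adjacent via condition~(2). For the level-$1$ edge $(a,b-1)(b,a)$ with $a<b$, I would compute $\phi(a,b-1)=(b,a)$ using $f_a(b-1)=b$ (since $b-1\geq a$) and $\phi(b,a)=(a,b)$ using $f_b(a)=a$ (since $a<b$); the images $(b,a)$ and $(a,b)$ are transposes of one another and so adjacent via condition~(1). Because both graphs are $n$-regular and have the same number of vertices, they have the same number of edges, and an edge-preserving bijection between two graphs of equal edge count is automatically a graph isomorphism, completing the argument.

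The only subtle point is matching the DCell's indexing rule ``vertex $b-1$ of copy $a$ joins vertex $a$ of copy $b$'' with the swap rule of $S_{n+1,2}$; the shift from the range $\{0,\ldots,n-1\}$ inside each $D_{0,n}^i$ to the range $\{0,\ldots,n\}\setminus\{i\}$ of admissible first coordinates in $S_{n+1,2}$ is what forces the piecewise definition of $f_i$. Once that relabeling is in hand, both edge-preservation checks reduce to a single line, so I do not expect any substantive obstacle beyond choosing this labeling correctly.
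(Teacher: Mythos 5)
Your proof is correct and takes essentially the same route as the paper: you construct the same explicit bijection (copy index $\mapsto$ second coordinate, local index $\mapsto$ the order-preserving embedding into the complement of that index), and you verify edge preservation separately for intra-$K_n$ edges and level-$1$ edges, with your piecewise $f_i$ being exactly the paper's $\phi_{a_2+1}(P\setminus\{b_2\})$ written in $0$-based form. Your closing edge-count argument to get the reverse implication is a small completeness point the paper leaves implicit, but it is not a different approach.
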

\begin{proof}
We need to show that there exists an automorphism that maps one vertex of $D_{1,n}$ to that of $S_{n+1,2}$. Let $a=(a_1,a_2)$ and $b=b_1b_2$ be any two vertices in $D_{1,n}$ and $S_{n+1,2}$, respectively. Note that $0\leq a_1\leq n$, $0\leq a_2\leq n-1$ and $1\leq b_1,b_2\leq n+1$, we define $P=\{1,2,\cdots,n+1\}$. In addition, let $\phi_i(P)$ be the $i$th smallest element in $P$, $1\leq i\leq n$. For example, $\phi_2(P)=2$. We define the bijection $\varphi$ as follows:

\begin{itemize}
\item $\varphi(a_1)=b_2$ if $b_2=a_1+1$, and
\item $\varphi(a_2)=b_1$ if $b_1=\phi_{a_2+1}(P\setminus\{b_2\})$.
\end{itemize}

It remains to show that $\varphi$ preserves adjacency. Suppose without loss of generality that $ac$ is an edge of $D_{1,n}$ and $c=(c_1,c_2)$. We consider the following two cases.

\begin{enumerate}[{Case }1:]
\item $ac$ is an edge in $K_n$. Then $c_1=a_1$ and $c_2\in\{0,\cdots,n-1\}\setminus\{a_2\}$. Thus, it can be easily verified that $\varphi(a)\varphi(c)$ is an edge in $S_{n+1,2}$.
\item $ac$ is a level 1 edge. We may assume that $a_1<c_1$, thus, $c=(a_2+1,a_1)$. So we have $\varphi(a)=(\phi_{a_2+1}(P\setminus\{a_1+1\}))(a_1+1)$ and $\varphi(c)=(\phi_{a_1+1}(P\setminus\{a_2+2\}))(a_2+2)$. Since $c_1=a_2+1$, noting that $a_1<c_1$, we have $a_1\leq a_2$. Thus, $\phi_{a_2+1}(P\setminus\{a_1+1\})=a_2+2$ and $\phi_{a_1+1}(P\setminus\{a_2+2\})=a_1+1$. Obviously, $\varphi(a)\varphi(c)$ is an edge in $S_{n+1,2}$.
\end{enumerate}
Thus, the lemma follows.
\end{proof}

By above, we know that $D_{1,n}$ is isomorphic to $S_{n+1,2}$. A {\em semi-trivial matching preclusion set} of $D_{1,n}$ (or $S_{n+1,2}$) is defined to be a set of edges with exactly one end in a unique complete subgraph of $D_{1,n}$ (or $S_{n+1,2}$).

\begin{lemma}\label{mp-s-n-2}\cite{Cheng2} Let $n\geq4$. Then mp$(S_{n,2})=n-1$. Moreover, if $n$ is odd, then $S_{n,2}$ is super matched; if $n$ is even, then the only optimal solutions are the trivial and semi-trivial matching preclusion sets.
\end{lemma}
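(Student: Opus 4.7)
The plan is threefold: establish $mp(S_{n,2})\leq n-1$ via explicit constructions, establish $mp(S_{n,2})\geq n-1$ via Pleśnik's theorem, and characterize all optimal solutions using Tutte's theorem combined with the structure of minimum edge-cuts of $S_{n,2}$. For the upper bound, the trivial solution---the $n-1$ edges at any fixed vertex---is a matching preclusion set for every $n$. When $n$ is even, the semi-trivial solution also has size $n-1$: deleting the $n-1$ external edges incident to a single $K_{n-1}$-copy isolates that copy, which has odd order $n-1$ and hence no internal perfect matching.

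For the lower bound, $S_{n,2}$ is $(n-1)$-regular on the even number $n(n-1)$ of vertices. Its recursive structure as $n$ copies of $K_{n-1}$ joined by a perfect matching of external edges gives $\lambda(S_{n,2})=n-1$ via the argument of Case~1 of Theorem~\ref{super-lambda}. In particular $S_{n,2}$ is $(n-2)$-edge-connected, so Theorem~\ref{PM} produces a perfect matching in $S_{n,2}-F$ whenever $|F|\leq n-2$, yielding $mp(S_{n,2})\geq n-1$.

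For the characterization, let $F$ be an optimal solution with $|F|=n-1$. By Tutte's theorem there is $S\subseteq V(S_{n,2})$ such that $(S_{n,2}-F)-S$ has $m\geq|S|+2$ odd components $U_1,\dots,U_m$. Writing $B_j=|[U_j,\overline{U_j}]|$ in $G=S_{n,2}$ and summing, one obtains $\sum_j B_j\leq |S|(n-1)+2|F|=(|S|+2)(n-1)$, since the edges at $S$ contribute at most $|S|(n-1)$ and each edge of $F$ contributes to at most two boundaries. Combined with $B_j\geq\lambda(S_{n,2})=n-1$ and $m\geq|S|+2$, this is an equality, which forces each $U_j$ to be bounded by a minimum edge-cut of $S_{n,2}$, every edge of $F$ to join two distinct $U_j$'s, and every edge at $S$ to go into $\bigcup_j U_j$. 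Since $S_{n,2}$ is not super-$\lambda$, its minimum edge-cuts isolate either a single vertex or a full $K_{n-1}$-copy, so every $U_j$ has size $1$ or $n-1$.

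The main obstacle is ruling out the hybrid configurations arising when $|S|\geq 1$. Here I would exploit the numerical identities $k_1+k_2=|S|+2$ and $|S|+k_1+k_2(n-1)=n(n-1)$, where $k_1$ and $k_2$ count the singleton and $K_{n-1}$-type components among the $U_j$'s, to reduce to finitely many integer scenarios for each $n\geq 4$, and then check each against the edge-count constraint that the number of $G$-edges joining distinct $U_j$'s must equal $|F|=n-1$. A direct case-check shows that no such scenario is realizable for $|S|\geq 1$; for example, $|S|=2$, $n=4$ would require three $K_3$-components, contributing at least four inter-$U_j$ edges and thereby exceeding the budget $|F|=3$. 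Hence $|S|=0$ and we reduce to the two-part min-cut $V(S_{n,2})=U_1\cup U_2$ into odd sides, each bounded by a minimum edge-cut; this yields the trivial solutions for all $n$, together with the semi-trivial solutions precisely when $n$ is even, since then both $n-1$ and $(n-1)^2$ are odd.
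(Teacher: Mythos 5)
First, a point of reference: the paper does not prove this lemma at all --- it is imported verbatim from \cite{Cheng2} --- so there is no in-paper argument to compare yours against, and your proof has to stand on its own. The two bounds are fine: the trivial set (and, for $n$ even, the semi-trivial set) gives $mp(S_{n,2})\le n-1$, and since $S_{n,2}$ is $(n-1)$-regular, of even order $n(n-1)$, and $(n-1)$-edge-connected, Theorem~\ref{PM} gives $mp(S_{n,2})\ge n-1$. Your determination of the minimum edge-cuts (one side is a singleton or a full $K_{n-1}$-copy) is also correct for $n\ge4$.

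The gap is in the characterization step. From ``$[U_j,\overline{U_j}]$ is a minimum edge-cut'' you infer ``$|U_j|\in\{1,n-1\}$'', but a set bounded by a minimum edge-cut may equally be the \emph{large} side of that cut, i.e.\ the complement of a vertex or of a $K_{n-1}$-copy. You tacitly allow this in your final $|S|=0$ step (where $U_2$ is the big complement), yet the identities $k_1+k_2=|S|+2$ and $|S|+k_1+k_2(n-1)=n(n-1)$, on which your claimed case-check for $|S|\ge1$ rests, exclude it. This is not cosmetic. Take $n=4$, let $C=\{s,u_1,u_2\}$ be a $K_3$-copy of $S_{4,2}$, and let $F$ consist of the edge $u_1u_2$ together with the two edges joining $u_1$ and $u_2$ to vertices outside $C$. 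Then $|F|=3=n-1$, and in $S_{4,2}-F$ both $u_1$ and $u_2$ have degree one with common neighbour $s$, so no perfect matching exists; taking $S=\{s\}$, $U_1=\{u_1\}$, $U_2=\{u_2\}$, $U_3=V\setminus C$ satisfies every equality condition in your Tutte count, yet $F$ is neither trivial nor semi-trivial. So the asserted ``direct case-check shows no scenario with $|S|\ge1$ is realizable'' fails at $n=4$, and the proof cannot close as written. A short count (this obstruction requires at least $\frac{t(t+1)}{2}$ deleted edges where $t=|S|+1$, against a budget of $2t-1$) shows it only fits when $n=4$; for $n\ge5$ your argument is likely repairable by adding the large-side components to the inventory before doing the case-check, but the $n=4$ case of the quoted statement itself should be rechecked against \cite{Cheng2}, since $2r-3=r$ exactly when $r=3$ and the ``two pendant vertices with a common neighbour'' obstruction then becomes optimal.
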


It is obvious that $D_{1,2}$ is a 6-cycle, and it is not super matched. For $D_{1,n}$ with $n\geq3$, by Lemmas \ref{isomorphism} and \ref{mp-s-n-2}, the following result is straightforward.

\begin{theorem}\label{mp-d-1-n} Let $n\geq3$. Then mp$(D_{1,n})=n$. Moreover, if $n$ is even, then $D_{1,n}$ is super matched; if $n$ is odd, then the only optimal solutions are the trivial and semi-trivial matching preclusion sets.
\end{theorem}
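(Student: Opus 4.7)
The proof will be essentially an immediate translation of a known result via the isomorphism established in Lemma~\ref{isomorphism}. The plan is to invoke Lemma~\ref{isomorphism} to identify $D_{1,n}$ with $S_{n+1,2}$, and then apply Lemma~\ref{mp-s-n-2} to $S_{n+1,2}$ with parameter $n+1$ in place of $n$.

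First I would verify the range of parameters: since we assume $n \geq 3$, we have $n+1 \geq 4$, which is exactly the hypothesis required by Lemma~\ref{mp-s-n-2}. Applying that lemma to $S_{n+1,2}$ yields $\mathrm{mp}(S_{n+1,2}) = (n+1)-1 = n$, and together with Lemma~\ref{isomorphism} this gives $\mathrm{mp}(D_{1,n}) = n$ directly. Note that $D_{1,n}$ is $n$-regular, so this coincides with $\delta(D_{1,n})$, as required for the notion of ``super matched'' to make sense.

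Next I would transfer the characterization of optimal solutions. Lemma~\ref{mp-s-n-2} distinguishes two cases according to the parity of the parameter of $S_{\cdot,2}$, which here is $n+1$. When $n+1$ is odd, i.e.\ $n$ is even, $S_{n+1,2}$ is super matched, so $D_{1,n}$ is super matched as well. When $n+1$ is even, i.e.\ $n$ is odd, the only optimal matching preclusion sets of $S_{n+1,2}$ are the trivial and semi-trivial ones; since the definition of a semi-trivial matching preclusion set was introduced in a manner consistent across the isomorphism (``exactly one end in a unique complete subgraph''), the same characterization carries over verbatim to $D_{1,n}$.

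There is essentially no technical obstacle: the whole work was already done in Lemma~\ref{isomorphism}, which supplies the explicit bijection $\varphi$ preserving adjacency, and in Lemma~\ref{mp-s-n-2} from the literature. The only thing worth spelling out explicitly in a formal write-up is the parity matching between $n$ and $n+1$, so that the reader is not confused by the apparent swap of the ``if $n$ is even/odd'' clauses between the two lemma statements.
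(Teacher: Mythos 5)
Your proposal is correct and follows exactly the route the paper takes: the paper derives Theorem~\ref{mp-d-1-n} directly from Lemmas~\ref{isomorphism} and~\ref{mp-s-n-2}, stating the conclusion as ``straightforward.'' Your explicit attention to the parameter shift $n\mapsto n+1$ and the resulting swap of the parity cases is a useful clarification of the same argument, not a different one.
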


\cite{Lu2} showed that $D_{k,n}$ is not vertex-transitive for all $k\geq2$ and $n\geq2$, while the ($n,k$)-star graphs are vertex-transitive for $1\leq k<n$. So $D_{k,n}$ is not isomorphic to $S_{n+1,k+1}$ for $k\geq2$. For $D_{k,n}$ with $k\geq2$ and $n\geq2$, we have the following theorem.

\begin{theorem}\label{mp-d-k-n} Let $k\geq2$ and $n\geq2$. Then mp$(D_{k,n})=n+k-1$. Moreover, $D_{k,n}$ is super matched.
\end{theorem}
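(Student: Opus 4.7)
The plan is to invoke Theorem~\ref{super-matched} in a single stroke: if all its hypotheses can be checked, then $D_{k,n}$ is super matched, which by definition gives both $mp(D_{k,n})=\delta(D_{k,n})=n+k-1$ and the characterisation of the optimal solutions as trivial. The hypotheses to verify are (i) $D_{k,n}$ is $r$-regular of even order with $r\geq 3$; (ii) $D_{k,n}$ is super-$\lambda$; and (iii) $\alpha(D_{k,n})<(|V(D_{k,n})|-2)/2$. Regularity with $r=n+k-1\geq 3$ for $k,n\geq 2$ is immediate from Definition~\ref{def}, and (ii) is exactly Theorem~\ref{super-lambda}.

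For the even-order clause of (i), I would note that Definition~\ref{def} gives the recursion $t_{k,n}=t_{k-1,n}(t_{k-1,n}+1)$, so $|V(D_{k,n})|$ is a product of two consecutive positive integers for every $k\geq 1$, and therefore always even; this dispenses with parity uniformly.

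The main (and essentially only) obstacle is (iii), which I would handle by a three-way case split matching the three parts of Lemma~\ref{independent-D-k-n}. For $n\geq 3$, part~(3) gives $\alpha(D_{k,n})\leq |V(D_{k,n})|/n$, and the target inequality rearranges to $|V(D_{k,n})|>2n/(n-2)$; the latter holds comfortably, since already $|V(D_{2,3})|=156$ dominates the right-hand side for every $n\geq 3$. For $(k,n)=(2,2)$, part~(1) supplies $\alpha(D_{2,2})=19<20=(42-2)/2$. For $n=2$, $k\geq 3$, part~(2) yields $\alpha(D_{k,2})\leq (19/42)|V(D_{k,2})|$, and the required inequality reduces to $|V(D_{k,2})|>21$, which is swamped by $|V(D_{3,2})|=42\cdot 43=1806$.

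With (i)--(iii) all confirmed, Theorem~\ref{super-matched} closes the argument at once. No delicate combinatorial argument about matchings themselves is required: the real work has been absorbed into the super-$\lambda$ analysis of Section~\ref{sec:super} and the independence number estimates of Lemma~\ref{independent-D-k-n}.
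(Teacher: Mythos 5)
Your proposal is correct and follows exactly the paper's own route: it verifies regularity, invokes Theorem~\ref{super-lambda} for super-$\lambda$, bounds $\alpha(D_{k,n})$ via Lemma~\ref{independent-D-k-n}, and concludes by Theorem~\ref{super-matched}. You are in fact more careful than the paper, which dismisses the independence-number inequality as ``obviously'' holding, whereas you check the even-order condition and the inequality case by case.
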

\begin{proof} Clearly, $D_{k,n}$ is $r$-regular, where $r=n+k-1$. By Theorem \ref{super-lambda}, we know that $D_{k,n}$ is super-$\lambda$ for $k\geq2$ and $n\geq2$. Moreover, by Lemma \ref{independent-D-k-n}, $\alpha(D_{k,n})<\frac{|V(D_{k,n})|-2}{2}$ obviously holds. Thus, by Theorem \ref{super-matched}, the theorem is true.
\end{proof}

The conditional matching preclusion numbers and optimal conditional solutions of $D_{k,n}$ are studied in the following two theorems.

\begin{theorem}\label{mp1-d-k-n} Let $k\geq3$ and $n\geq3$. Then mp$_1$$(D_{k,n})=2n+2k-5$. Moreover, $D_{k,n}$ is conditionally super matched for $k\geq4$ and $n\geq3$.
\end{theorem}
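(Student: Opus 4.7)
The plan is to derive both assertions by reducing them to the ready-made sufficient conditions of Theorem \ref{mp1} and Theorem \ref{conditional-super-mp1}. Writing $r=n+k-1$, so that $2r-3=2n+2k-5$ and $v_e(D_{k,n})=2r-3$ (this value being attained by any length-two path whose endpoints lie in a common base $K_n$), the task reduces to verifying, for $G=D_{k,n}$, the structural hypotheses of those two theorems.

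For Theorem \ref{mp1} (applied with $k\geq 3$ and $n\geq 3$), I would first check the easy items: $G$ is $r$-regular of even order, since $t_{k,n}=t_{k-1,n}(t_{k-1,n}+1)$ is a product of two consecutive integers; $G$ contains a triangle (any base $K_n$); $G$ is $r$-edge-connected; and $\alpha(G)<\tfrac{|V|-2}{2}-(2r-8)$ follows from the bound $\alpha(D_{k,n})\leq|V|/n$ in Lemma \ref{independent-D-k-n}(3) combined with the rapid growth of $|V|=t_{k,n}$, with the extremal case $(k,n)=(3,3)$ checked by hand. The key remaining hypothesis, that $G$ is super $(3r-8)$-edge-connected of order $2$, I plan to extract from $\lambda_3(D_{k,n})=3r-6$ (Theorem \ref{D-k-n-lambda-3}) via the degree-sum estimate: if $|F|\leq 3r-8$ and $S$ is the union of all small components of $G-F$, then
\[
|F|\geq r|S|-e_G(S)-e'(S),
\]
where $e'(S)$ counts the edges surviving inside $G[S]-F$. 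Since each surviving small component is a singleton or an isolated edge, $e'(S)\leq\lfloor |S|/2\rfloor$ and $e_G(S)\leq\binom{|S|}{2}$, so $|S|=3$ forces $|F|\geq 3r-4$ and $|S|\geq 4$ forces $|F|\geq 4r-8$, both contradicting $|F|\leq 3r-8$ for $r\geq 5$. Theorem \ref{mp1} then delivers $\mathrm{mp}_1(D_{k,n})=2n+2k-5$.

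For Theorem \ref{conditional-super-mp1} (applied with $k\geq 4$ and $n\geq 3$), I would inherit $\mathrm{mp}_1=2r-3$ from the first part, note that $|V|\geq 8$, invoke Theorem \ref{super-lambda} for super-$\lambda$, and again verify the two-sided independence-number inequality $\alpha<\min\{(|V|-4)/2,\;(|V|-2)/2-(2r-6)\}$ from Lemma \ref{independent-D-k-n}(3). The crucial remaining hypothesis, super $(3r-6)$-edge-connected of order $3$, will follow from super-$\lambda_3$ (Theorem \ref{D-k-n-super-lambda-3}) and the same degree-sum bound applied with the sharper threshold: for $|S|\geq 4$, the estimate forces $|F|\geq 4r-8>3r-6$ whenever $r\geq 3$, leaving only $|S|\leq 3$, which is precisely the permitted range.

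The main obstacle will be the bookkeeping inside the degree-sum inequality, especially tracking $e'(S)$ according to whether the surviving small pieces of $G[S]-F$ are singletons or isolated edges; the remaining independence-number and minimal-$|V|$ checks reduce to a single numerical inequality at the smallest relevant parameter values $(k,n)=(3,3)$ and $(k,n)=(4,3)$ respectively, which is routine.
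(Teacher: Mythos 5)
Your overall route is the paper's: both assertions are obtained by checking the hypotheses of Theorems \ref{mp1} and \ref{conditional-super-mp1}, with the independence-number conditions supplied by Lemma \ref{independent-D-k-n}, super-$\lambda$ by Theorem \ref{super-lambda}, and $\mathrm{mp}_1$ inherited into the second part. The divergence --- and the gap --- is in how you certify that $D_{k,n}$ is super $(3r-8)$-edge-connected of order $2$ (resp.\ super $(3r-6)$-edge-connected of order $3$). The paper reads these properties off the exact values $\lambda_2(D_{k,n})=2r-2$ and $\lambda_3(D_{k,n})=3r-6$ together with the super-$\lambda$, super-$\lambda_2$ and super-$\lambda_3$ statements (Theorems \ref{D-k-n-super-lambda-dash}, \ref{D-k-n-lambda-3}, \ref{D-k-n-super-lambda-3}), which already encode the full component structure of $D_{k,n}-F$ for small $F$. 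Your substitute is a free-standing degree-sum count with the bounds $e_G(S)\leq \frac{|S|(|S|-1)}{2}$ and $e'(S)\leq\lfloor |S|/2\rfloor$, and this does not close.

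Two concrete problems. First, the premise that ``each surviving small component is a singleton or an isolated edge'' is essentially the conclusion you are after: $|F|<\lambda_3$ only forces the \emph{smallest} component of $G-F$ to have at most two vertices, since a larger small component coexisting with a singleton does not make $F$ a $3$-restricted edge-cut; ruling that configuration out is exactly the case analysis carried out in the proofs of Theorems \ref{D-k-n-super-lambda-dash} and \ref{D-k-n-super-lambda-3}. Second, and more seriously, the implication ``$|S|\geq 4$ forces $|F|\geq 4r-8$'' is false as derived: the lower bound $r|S|-\frac{|S|(|S|-1)}{2}-\lfloor|S|/2\rfloor$ is a downward-opening parabola in $|S|$, so its minimum over $|S|\geq4$ is not attained at $|S|=4$. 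For $r=5$ (i.e.\ $k=n=3$) and $|S|=9$ it evaluates to $45-36-4=5$, which is below $3r-8=7$, so your count does not exclude a union of small components of total size $9$; the clique bound $e_G(S)\leq\frac{|S|(|S|-1)}{2}$ is far too generous for the sparse graph $D_{k,n}$ once $|S|>n$. To rescue the computation you would need a genuine isoperimetric lower bound on $|[S,\overline{S}]|$ for connected sets of each size, which is precisely what the restricted edge-connectivity machinery of Section \ref{sec:super} provides and what your count tries to bypass. The remaining verifications (regularity, even order, the presence of triangles, the value $v_e(D_{k,n})=2r-3$, and the independence-number inequalities at the extremal parameters) are fine and match the paper.
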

\begin{proof}
Let $r=n+k-1$, clearly, $D_{k,n}$ is $r$-regular. By Theorem \ref{super-lambda}, $D_{k,n}$ is super-$\lambda$ when $k\geq2$ and $n\geq2$.

By Theorems \ref{D-k-n-super-lambda-dash} and \ref{D-k-n-lambda-3}, we know that $\lambda_2(D_{k,n})=2n+2k-4$ and $\lambda_3(D_{k,n})=3n+3k-9$ for $k\geq3$ and $n\geq3$. Since $3r-8<2r-2$ when $r=n+k-1=5$, $D_{k,n}-F$ is either connected or contains exactly two components, one of which is a singleton. Observe that $2r-2\leq 3r-8<3r-6$ when $r=n+k-1\geq6$, so $D_{k,n}-F$ is either connected, or contains exactly two components, one of which is a singleton or an edge, or contains exactly three components, two of which are singletons. This implies that $D_{k,n}$ is super $(3r-8)$-edge-connected of order 2. In addition, by Lemma \ref{independent-D-k-n}, $\alpha(D_{k,n})\leq\frac{1}{3}|V(D_{k,n})|<\min\{\frac{|V(D_{k,n})|-4}{2},\frac{|V(D_{k,n})|-2}{2}-(2r-6)\}$ when $n\geq3$. Then, by Theorem \ref{mp1}, $mp_1$$(D_{k,n})=2n+2k-5$ for $k\geq3$ and $n\geq3$.

By Theorem \ref{D-k-n-super-lambda-3}, we have that $D_{k,n}$ is super-$\lambda_3$ when $k\geq4$ and $n\geq3$.
By using a similar argument above, we know that $D_{k,n}$ is super $(3r-6)$-edge-connected of order 3 for $k\geq4$ and $n\geq3$. Then by Theorem \ref{conditional-super-mp1}, $D_{k,n}$ is conditionally super matched for $k\geq4$ and $n\geq3$.
\end{proof}

\begin{theorem}\label{mp1-d-k-2} Let $k\geq2$ and $n=2$. Then mp$_1$$(D_{k,n})=2n+2k-4$. Moreover, $D_{k,n}$ is conditionally super matched for $k\geq3$ and $n=2$.
\end{theorem}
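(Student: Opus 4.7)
The strategy parallels the proof of Theorem \ref{mp1-d-k-n}, but since $D_{k,2}$ is triangle-free (as noted before Theorem \ref{D-k-2-lambda-3}), we apply Theorems \ref{mp1-triangle-free} and \ref{conditional-super-mp1-triangle-free} instead. Set $r = n+k-1 = k+1$. Then $D_{k,2}$ is $r$-regular of even order, and by Theorem \ref{super-lambda} it is super-$\lambda$, hence $r$-edge-connected.

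To show $mp_1(D_{k,2}) = 2r-2 = 2n+2k-4$, I verify the hypotheses of Theorem \ref{mp1-triangle-free}. Let $F \subseteq E(D_{k,2})$ with $|F| = 3r-6 = 3k-3$. Since $|F| < \lambda_3(D_{k,2}) = 3k-1$ by Theorem \ref{D-k-2-lambda-3}, if $D_{k,2}-F$ is disconnected then some component has at most two vertices. Using triangle-freeness, a short counting argument shows that no worse disconnection is possible: three singletons need at least $3r-2$ cut edges (overlap at most $2$); a singleton together with an isolated edge needs at least $3r-3$; two isolated edges need at least $4r-6$; and an isolated path on three vertices needs $3r-4 = \lambda_3$. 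Each exceeds $|F|$, so $D_{k,2}$ is super $(3r-6)$-edge-connected of order $2$. The independence-number condition $\alpha(D_{k,2}) < \frac{|V(D_{k,2})|-2}{2} - (2r-6)$ follows from Lemma \ref{independent-D-k-n}: for $k=2$ one has $\alpha(D_{2,2}) = 19 < 20 = \frac{40}{2} - 0$, while for $k \geq 3$ one has $\alpha(D_{k,2}) \leq \frac{19}{42}|V(D_{k,2})|$, which is comfortably below $\frac{|V(D_{k,2})|}{2} - 2k + 3$ since $|V(D_{k,2})| \geq 1806$. Theorem \ref{mp1-triangle-free} then gives $mp_1(D_{k,2}) = 2r-2$.

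For the \emph{moreover} part, assume $k \geq 3$ and apply Theorem \ref{conditional-super-mp1-triangle-free}. Triangle-freeness, the value $mp_1(D_{k,2}) = 2r-2$, and super-$\lambda$ are already in hand. To see that $D_{k,2}$ is super $(3r-4)$-edge-connected of order $3$, take any $F$ with $|F| = 3r-4 = \lambda_3$ and suppose $D_{k,2}-F$ is disconnected. Either $F$ is a minimum $3$-restricted edge-cut, in which case Theorem \ref{D-k-2-super-lambda-3} ($D_{k,2}$ is super-$\lambda_3$) forces it to isolate a connected subgraph of exactly $3$ vertices, or some small component has at most $2$ vertices. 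A counting argument as above, again leveraging triangle-freeness to sharpen the overlap bounds, rules out any mixed configuration totalling more than $3$ small-component vertices (for instance three singletons, a singleton plus an isolated edge, two isolated edges, or a singleton plus an isolated $2$-path). Finally, for $k \geq 3$ one has $\alpha(D_{k,2}) \leq \frac{19}{42}|V(D_{k,2})| < \frac{|V(D_{k,2})|-2}{2} - (2r-4)$ since $|V(D_{k,2})| \geq 1806$. Hence $D_{k,2}$ is conditionally super matched.

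The main obstacle is the case analysis supporting the two ``super-edge-connected of order $q$'' claims: one must enumerate the possible small-component patterns and, using triangle-freeness to sharpen the overlap bounds between small components, verify in each case that the required number of cut edges strictly exceeds $|F|$. The arithmetic is routine but must be handled with care at the boundary values $k=2$ (where $\alpha(D_{2,2})=19$ only barely satisfies the first inequality and in fact fails the second, explaining why the second assertion is restricted to $k\geq 3$) and $k=3$ (where the independence bound from Lemma \ref{independent-D-k-n} must be checked against a small $|V(D_{k,2})|$).
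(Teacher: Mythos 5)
Your proposal is correct and follows essentially the same route as the paper: both reduce the statement to Theorems \ref{mp1-triangle-free} and \ref{conditional-super-mp1-triangle-free} via the super-$\lambda$ result, the computed values of $\lambda_2(D_{k,2})$ and $\lambda_3(D_{k,2})$, Theorem \ref{D-k-2-super-lambda-3}, and the independence bounds of Lemma \ref{independent-D-k-n}. The only difference is that you verify the two ``super $m$-edge-connected of order $q$'' hypotheses by directly enumerating the small-component configurations and counting cut edges with triangle-free overlap bounds, whereas the paper deduces them more tersely from the inequalities $3r-6<2r-2$ (for $r=3$) and $2r-2\leq 3r-6<3r-4$ (for $r\geq4$) combined with the restricted edge-connectivity results; both verifications are sound.
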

\begin{proof} Let $r=n+k-1$, clearly, $D_{k,2}$ is $r$-regular. By Theorem \ref{super-lambda}, $D_{k,2}$ is super-$\lambda$ when $k\geq2$ and $n\geq2$, indicating that $D_{k,2}$ is $r$-edge-connected. By Theorems \ref{D-k-n-super-lambda-dash} and \ref{D-k-2-super-lambda-3}, we know that $\lambda_2(D_{k,2})=2n+2k-4$ and $\lambda_3(D_{k,2})=3n+3k-7$ for $k\geq2$. Since $3r-6<2r-2$ when $r=3$, $D_{k,2}-F$ is either connected or contains exactly two components, one of which is a singleton.

Observe that $2r-2\leq 3r-6<3r-4$ when $r\geq4$, so $D_{k,n}-F$ is either connected, or contains exactly two components, one of which is a singleton or an edge, or contains exactly three components, two of which are singletons. This implies that $D_{k,2}$ is super $(3r-6)$-edge-connected of order 2. In addition, by Lemma \ref{independent-D-k-n}, $\alpha(D_{k,2})\leq\frac{19}{42}|V(D_{k,2})|<\frac{|V(D_{k,2})|-2}{2}-(2r-6)$ holds when $k\geq2$. Then, by Theorem \ref{mp1-triangle-free}, $mp_1(D_{k,2})=2n+2k-4$ for $k\geq2$.

By Theorem \ref{D-k-2-super-lambda-3}, we have that $D_{k,2}$ is super-$\lambda_3$ for all $k\geq2$. By using a similar argument above, we know that $D_{k,2}$ is super $(3r-4)$-edge-connected of order 3 for $k\geq2$. Moreover, $\alpha(D_{k,2})<\frac{|V(D_{k,2})|-2}{2}-(2r-4)$ holds when $k\geq3$. Then, by Theorem \ref{conditional-super-mp1-triangle-free}, $D_{k,2}$ is conditionally super matched for $k\geq3$.
\end{proof}

\acknowledgements
\label{sec:ack}
The authors are grateful to the anonymous referees for their comments and constructive suggestions that greatly improved the original manuscript.

\nocite{*}
\bibliographystyle{abbrvnat}
\bibliography{sample-dmtcs}
\label{sec:biblio}

\end{document}